\documentclass[11pt]{amsart}

\usepackage{amsmath,amssymb,array,latexsym,amsthm,pdfsync,hyperref}
\usepackage{color}
\theoremstyle{definition}

\newtheorem{thm}{Theorem}
\newtheorem{lem}{Lemma}
\newtheorem{cor}{Corollary}
\newtheorem{prop}{Proposition}

\theoremstyle{remark}
\theoremstyle{remark}\newtheorem{rem}{Remark}
\theoremstyle{remark}\newtheorem{notat}{Notation}

\date{}
\subjclass[2010]{Primary 60F05; Secondary 60F17, 62E20}
\keywords{depth, consistency, central limit theorems, empirical processes}
\begin{document}

\title{Half-Region Depth for Stochastic Processes}
\author{James Kuelbs} 
\address{James Kuelbs\\Department of Mathematics,  University of Wisconsin, Madison, WI 53706-1388}
\email{kuelbs@math.wisc.edu}
\author{Joel Zinn}
\thanks{*Partially supported by NSF grant DMS-1208962.}
\address{\noindent Joel Zinn\\Department of Mathematics, Texas A\&M University, College Station, TX 77843-3368}
\email{jzinn@math.tamu.edu}

\begin{abstract} We study the concept of half-region depth, introduced by L\'opez-Pintado
and Romo in \cite{lp-r-half}. We show that for a wide variety of standard stochastic processes, such as Brownian motion and other symmetric stable processes with stationary independent increments tied down at 0, half-region depth assigns depth zero to all sample functions. To alleviate this difficulty we introduce a method of smoothing, which often not only eliminates the problem of zero depth, but allows us to extend
the theoretical results on consistency in that paper up to the $\sqrt n$ level for many smoothed processes.
\end{abstract}

\maketitle

\section{Introduction and Some Notation}\label{intro}
A number of depth functions are available to provide an ordering of finite dimensional data, and more recently in \cite{lp-r-half} the interesting notion of half-region depth for stochastic processes was introduced. This depth applies to data given in terms of infinite sequences, as functions defined on some interval, and even in more general settings. However, as we will see, one must exercise some care in its use.

In this paper we focus  on three issues. The first is to show (see section 2) that for many standard data sources this depth is identically zero, and hence the need to be cautious when employing it. A second issue we examine is how the problem of zero half-region depth can be avoided, and 
in Proposition \ref{smooth} it is shown that a smoothing of the data process will eliminate this problem. The third issue we consider involves limit theorems for the empirical half-region depth of these smoothed processes, and Theorem \ref{consist 1} is a basic consistency result with Theorem \ref{bdd prob clt} and Corollary \ref{rates} providing some rates of convergence for this consistency. In fact, it provides a sub-Gaussian tail bound. Now we turn to the notation used throughout the paper. In the remainder of this section we indicate some additional details as to how these issues are addressed, and how our results relate to other recent papers.

To fix some notation let $X:=\{X(t)=X_{t}\colon t\in T\}$ be a stochastic process on the probability space $(\Omega, \mathcal{F},P)$, all of whose sample paths are in $M(T)$, a linear space of real valued functions on $T$ which we assume to contain the constant functions. To handle measurability issues, we also {\bf always} assume that $ h \in M(T)$ implies
\begin{align}\label{Tcond}
 \sup_{t \in T}h(t) = \sup_{t \in T_0}h(t) < \infty, 
\end{align}
where $T_0$ is a fixed countable subset of $T$. Typical examples of $M(T)$ are the uniformly bounded continuous functions on $T$ when $T$ is a separable metric space, or the space of cadlag functions on $T$ for $T$ a compact interval of the real line. 
In either of these situations  $T_0$ could be any countable dense subset of $T$. It should also be observed that since (\ref{Tcond}) holds on the linear space $M(T)$, then $h \in M(T)$ implies
\begin{align}\label{Tcond2}
 \inf_{t \in T}h(t) = \inf_{t \in T_0}h(t) > - \infty~\rm {and}~ ||h||_{\infty} \equiv \sup_{t \in T}|h(t)| = \sup_{t \in T_0}|h(t)| < \infty.
\end{align}

If $g,h\colon T \rightarrow \mathbb{R}$ and $S \subseteq T$, let $g\preceq_S h$ (resp., $g\succeq_S h$),  denote that $g(t)\le h(t)$ (resp., $h(t)\ge h(t)$) for all $t\in S$. When $S=T$ we will simply write $g\preceq h$ (resp., $g\succeq h$). 
Then, for a function $h \in M(T)$,  the half-region depth with respect to $P$ is defined as 
\begin{align}\label{HR}
D(h,P):= D_{HR}(h,P):=\min(P(X \succeq h),P(X\preceq h)). 
\end{align}
To simplify, we also will write $D(h)$ for $D(h,P)$ when the probability measure $P$ is understood.
Since $M(T)$ is a linear space with (\ref{Tcond}) and (\ref{Tcond2}) holding, and the sample paths of the stochastic process $X$ are in $M(T)$, we see for each $h \in M(T)$ that 
\begin{align}\label{Tcond3}
\{ X \preceq h\}=\{ X \preceq_{T_0} h\}~{\rm {and}}~ \{X\succeq h\}= \{X\succeq_{T_0} h\}.
\end{align}
Thus the events in (\ref{HR}) are in $\mathcal{F}$ and the probabilities are defined.

Let $X_1,X_2, \cdots$ be i.i.d. copies of the process $X$, and assume $X,X_1,X_2,\cdots$ are defined on the probability space  $(\Omega,\mathcal{F},P)$ suitably enlarged, if necessary, and that all sample paths of each $X_j$ are in $M(T)$. Then, the empirical half-region
depth of $h \in M(T)$ based on the i.i.d. copies $X_1,\cdots,X_n$ is given by
\begin{align}\label{EHR}
D_{n}(h)=\min\{\frac{1}{n} \sum_{j=1}^n I( X_j \succeq h), \frac{1}{n} \sum_{j=1}^n I( X_j \preceq h)\}.
\end{align}

It is not surprising that like many other infinite dimensional problems, half-region depth is fraught with difficulties not found in the finite dimensional setting. In the next section we examine one such difficulty, namely, that there are classical situations in which the half-region depth is  equal to zero for all $h \in M(T)$. 
The recent paper \cite{dutta-tukey} obtains a result of similar type for Tukey's  half-space depth in the sequence space $\ell_2$, 
 and  here we'll present half-region depth examples that include data that appears as random sequences, and also as random functions from familiar continuous time stochastic processes. In particular, we will see sample continuous Brownian motion, tied down to be zero at $t=0$ with probability one, assigns zero half-region depth to all functions $h \in C[0,1]$, but this sort of problem also holds for other continuous time processes widely used to model data in a variety of settings.  Hence without suitable care, in these situations one is dealing with an object with little significance in the sense that if $X$ is a stochastic process with sample paths in M(T), and all $h \in M(T)$ have zero half-region depth with respect to $P=\mathcal{L}(X)$, then the implications for empirical consistency and central limit type behavior are trivial. That is, if the half-region depth function of every point $h \in M(T)$ is zero, then given $h$, either $I(X(t) \ge h(t)~ \forall~ t \in [0,1]) =
  0$ a.s.
   or 
$I(X(t) \le h(t)~ \forall~ t \in [0,1]) = 0$ a.s. with respect to $P.$ Since the empirical half-space depth function $D_n(h)$ given in (\ref{EHR}) is based on the minimum of two sums of such things, we have $D_n(h)- D(h) =D_n(h)=0$ a.s. with respect to $P$. Hence even the CLT is degenerate in this case.

Fortunately, in the final proposition of the next section we will see that 
in many situations smoothing the process by adding an independent real valued random variable $Z$  with a density as in (\ref{X Z smooth}) changes things dramatically 
for half-region depth. For example, sample continuous Brownian motion then attributes strictly positive depth to continuous functions.
In later sections we also present additional positive results for this depth. These include consistency results, and also some asymptotics at the $\sqrt n$ level. In these results the process $X$ will be as in (\ref{X Z smooth}),
and also satisfy some additional assumptions. 
It may also be worthwhile to mention that perhaps other forms of smoothing would be more suitable for other types of depth. This comment is motivated by the zero Tukey-depth result in \cite{dutta-tukey}, and also the zero projection depth results in \cite{chak-chaud-12}, which we found as we were in the final writing of this paper. 
Hence, it would be of interest to determine if a method of smoothing, of one sort or another, can be found to bypass this difficulty in other situations. 

In contrast to the smoothing we use, the paper  \cite{lp-r-half} also presents an alternative called modified half-region depth, which is non-degenerate at zero. There the depth itself is changed so as to be less restrictive, whereas here we retain the depth, but apply it to data which has been smoothed as in  (\ref{X Z smooth}). Moreover, the zero depth results we obtain are such that every function in the natural support of the process has depth zero, i.e. for sample continuous Brownian motion starting at zero at time zero, every continuous path has half-region depth zero. The results in  \cite{dutta-tukey} and  \cite{chak-chaud-12}  differ in that they show almost every function has zero depth with respect to the  the law of the process. Finally, we point out that the size of the collection of evaluation maps used in formulating a depth in the infinite dimensional setting, can make an enormous difference. If the collection is too large it is likely the depth will be degene
 rate, an
 d if it is too small the depth may not reveal details of importance in the data. This phenomenon also appears in connection with the central limit theorems we obtained for empirical processes and empirical quantile processes in \cite{kkz} and \cite{kz-quant}, where these CLTs may fail if the class of sets is too large, or there are degeneracies in the sample paths, as with Brownian motion tied down at zero. Again, smoothing helps, but one still needs to be careful, since the exact form of the depth and the evaluation maps used to define it can still produce unusual behavior. For example, in the setting of half-region depth the symmetric stable processes with stationary independent increments, cadlag paths on $[0,1]$, and tied down at $t=0$, are such that all cadlag paths on $[0,1]$ have half-region depth zero (Corollary \ref{ind incr} below), whereas by Proposition \ref{smooth} these processes smoothed as in (\ref{X Z smooth}) have positive depth. Moreover, they satisfy the 
consistency results and $\sqrt n$-asymptotics provided in Theorems 1,2, and 3. However, if we look at the increment half-region depth formed by differences of evaluations over only countably many disjoint subintervals of $[0,1]$ as in Corollary \ref{I hr d}, we see that both the smoothed and the unsmoothed version of these processes yield zero increment half-region depth for every function on $[0,1]$. Of course, similar comments apply to sample continuous Brownian motion, and we also have the half-region depth as defined in Corollary \ref{alpha  hr d1} degenerate at zero for all continuous functions on $[0,1]$ for both the smoothed and unsmoothed versions of Brownian motion.

\par
 \section{Zero Half-Region Depth and How It Can Be Eliminated}\label{zero depth}

The gist of this section is that  for many stochastic processes used in modeling data, half-region depth may be identically zero, but if we smooth the processes as indicated in  Proposition 4, this problem is eliminated. 

Subsection 2.1 deals with explicit classes of examples, and although these results demonstrate that zero half-region depth is a common phenomenon for many standard processes, the tools developed there should be useful when examining other processes for this problem. Furthermore, it should also be observed that the smoothing result in subsection 2.2, and the consistency and $\sqrt n$-asymptotics of sections 3 and 4, are independent of the proofs in subsection 2.1.

\subsection{Some Examples}

The half-region depths we examine first are for product probabilities $P$ on  the space of all real sequences $R(T)$, where $T= \{t:t =1,2\cdots\},$ and for each $h \in R(T)$  the half-region depth remains to be defined as in (\ref{HR}). As before we will write  $D(h)$ for $D(h,P)$ when the probability measure $P$ is understood.

For many such $P$ the uniformly bounded sequences $M(T)$ have probability zero, yet we still want to examine such situations as they are natural models of data sources, and they also  can be used (as in Corollaries \ref{alpha  hr d1} and \ref{I hr d}) to determine when  a half-region depth may be zero.
For example, if $P$ is the product probability whose coordinates are i.i.d. centered Gaussian
with variance one, then every coordinate-wise bounded sequence in $R(T)$ has half-region
depth equal to zero with respect to this $P$. Although the set of all such sequences has $P$-probability zero in this example, a little thought suggests much more may be true, and our next proposition shows that under rather broad circumstances the half-region depth may be zero for all sequences in $R(T).$ In particular, it applies to the Gaussian example we mentioned, and in Corollary 1 it also allows us to examine
the situation for sequences converging to zero, which are relevant when $P$ assigns mass one to  a  Banach sequence space such as $c_0$ or $\ell_p, 1\le p <\infty$.

Furthermore, if $P$ assigns probability one to $M(T)$, then using Proposition 4 at the end of this section we can show the half-region depth of every $h \in M(T)$ can be strictly positive for a smoothed version of the input data. This latter result applies to data indexed by countable or uncountable $T$, and $M(T)$ is as defined earlier. Of course, if $T$ is countably infinite, then $M(T)$ is a subset of the sequence space $\ell_{\infty}$, but our results also apply to many standard stochastic processes indexed by uncountable $T$.

Our first result provides necessary and sufficient conditions for half-region depth to be identically zero for $P$ a product measure on the sequence space $R(T).$ In contrast, a sufficient condition that implies a half-space depth is zero with $P$-probability one in $R(T)$ for various probabilities $P$, can be found in \cite{concerns}. However, these half-space depths are not zero everywhere, so determining when they are zero, when they are positive, and consistency issues for the related empirical depth are the main concerns there.

\begin{prop}\label{nasc} Let $\{Z_{t}:t \ge 1\}$ be independent rv's on the probability space $(\Omega, \mathcal{F},P)$ with  distribution functions $F_{t}$, and assume ${\bf a}=\{a_{t}\}_{t=1}^{\infty}$ is any sequence in $R(T)$. Then,
$$
D({\bf a},P)= 0
$$
if and only if  

\noindent (i) for at least one $t \in T,$  $P(Z_t \ge a_t)=0$ or $P(Z_t \le a_t)=0$, or
\bigskip

\noindent (ii) for all $t \in T,$ $P(Z_t \ge a_t)>0$ and $P(Z_t \le a_t)>0$, and
\begin{equation}\label{infinite sum}
\sum_{t \in T}P(Z_t \not= a_t)= \infty. 
\end{equation}
\end{prop}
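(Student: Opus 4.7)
The plan is to exploit the independence of the coordinates and reduce the two defining probabilities in (\ref{HR}) to infinite products. Since $T$ is countable, both $\{Z \succeq \mathbf{a}\} = \bigcap_{t} \{Z_t \ge a_t\}$ and $\{Z \preceq \mathbf{a}\} = \bigcap_{t} \{Z_t \le a_t\}$ are measurable, and by independence of the $Z_t$,
$$
P(Z \succeq \mathbf{a}) = \prod_{t \in T} P(Z_t \ge a_t), \qquad P(Z \preceq \mathbf{a}) = \prod_{t \in T} P(Z_t \le a_t).
$$
Hence $D(\mathbf{a},P) = 0$ if and only if at least one of these two infinite products equals zero. This reduction turns the proposition into a pure statement about convergence of infinite products.

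Next I would split into the two cases appearing in the statement. Condition (i) is clearly sufficient: if some factor in one of the products vanishes, then that product, and therefore the minimum, is zero. Conversely, if (i) fails, then every factor in both products lies in $(0,1]$, so neither product has a zero factor and the question of positivity becomes one of tail behavior.

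In that regime I would invoke the classical criterion: for numbers $x_t \in (0,1]$, the infinite product $\prod_t x_t$ is strictly positive if and only if $\sum_t (1 - x_t) < \infty$. Applied to $x_t = P(Z_t \ge a_t)$, it gives $\prod_t P(Z_t \ge a_t) = 0$ iff $\sum_t P(Z_t < a_t) = \infty$, and applied to $x_t = P(Z_t \le a_t)$, it gives $\prod_t P(Z_t \le a_t) = 0$ iff $\sum_t P(Z_t > a_t) = \infty$. Since $P(Z_t \ne a_t) = P(Z_t < a_t) + P(Z_t > a_t)$, at least one of these two sums diverges precisely when $\sum_t P(Z_t \ne a_t) = \infty$, which is condition (\ref{infinite sum}). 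This yields the desired equivalence with (ii).

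I do not anticipate any genuine obstacle here; once one observes that independence collapses the two half-region probabilities into infinite products, the rest is the standard $\prod_t x_t > 0 \iff \sum_t (1-x_t) < \infty$ criterion. The only real care point is to isolate case (i) first, so that the product-convergence criterion is applied only in the regime where every factor is strictly positive and the criterion actually applies.
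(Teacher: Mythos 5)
Your proposal is correct and follows essentially the same route as the paper: independence reduces the two half-region probabilities to the infinite products $\prod_t P(Z_t\ge a_t)$ and $\prod_t P(Z_t\le a_t)$, case (i) is split off so every factor is positive, and the standard criterion that a product of factors in $(0,1]$ is positive iff the sum of $1-x_t$ converges turns the question into the divergence of $\sum_t P(Z_t<a_t)+P(Z_t>a_t)=\sum_t P(Z_t\ne a_t)$. The paper phrases this through the equivalent positive-depth statement (Remark \ref{first rem}) and writes the factors via $F_t$ and $F_t^-$, but the substance is identical.
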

\bigskip

\begin{rem}\label{first rem}  Under the conditions of Proposition \ref{nasc}, it is immediate that the the conclusion of Proposition \ref{nasc} is equivalent to the claim that
$$
D({\bf a},P)>0
$$
if and only if for all $t \in T$, $P(Z_t \ge a_t)>0$ and $P(Z_t \le a_t)>0$, and 
\begin{equation}\label{finite sum}
\sum_{t \in T} P(Z_t \not= a_t)<\infty.
\end{equation}

\end{rem}

\begin{proof} Under the assumptions of Proposition \ref{nasc}, it suffices 
 to prove Remark \ref{first rem}. To do this we first we note that 
\begin{align*}&D({\bf a}, P)\\
&=\min(P(Z_t \le a_t \text{ for all } t \ge 1),P(Z_t \ge a_t \text{ for all } t \ge 1))\\
&=\min(\prod_{t \ge 1} F_{t}(a_t), \prod_{t \ge 1}(1- F_{t}^{-}(a_t))),
\end{align*}
where $F_t^{-}(x)$ is the left limit at $x \in \mathbb{R}$. 

Hence, $D({\bf a},P)>0$ if and only if for all $t \in T$ we have $P(Z_t \ge a_t)>0$ and $P(Z_t \le a_t)>0$, and both the products
\begin{equation}\label{prod one}
\prod_{t \ge 1} F_{t}(a_t)= \prod_{t \ge 1}(1-P(Z_t>a_t)), 
\end{equation}
and
\begin{equation}\label{prod two}
 \prod_{t \ge 1}(1- F_{t}^{-}(a_t)))= \prod_{t \ge 1}(1- P(Z_t<a_t)) 
\end{equation}
are strictly positive. Since $P(Z_t \ge a_t)>0$ and $P(Z_t \le a_t)>0$
for all $t \in T$, the products in (\ref{prod one}) and (\ref{prod two}) are strictly positive if and only if
\begin{equation}\label{finite sum one}
\sum_{t \in T}P(Z_t>a_t)<\infty,  
\end{equation}
and
\begin{equation}\label{finite sum two}
\sum_{t \in T}P(Z_t<a_t)<\infty,  
\end{equation}
respectively. Now (\ref{finite sum one}) and (\ref{finite sum two}) holding is equivalent to (\ref{finite sum}), and hence the proof is complete.
\end{proof}

\begin{cor}\label{contin distrib}  Let $\{Z_{t}:t \ge 1\}$ be independent rv's on the probability space $(\Omega, \mathcal{F},P)$ with continuous distribution functions $F_{t}$ for $t \in T_1$, where $T_1$ is an infinite subset of $T$. Then,
$$
D({\bf a},P)= 0
$$
for ${\bf all}$ sequences ${\bf a}=\{a_{t}\}_{t=1}^{\infty}$ in  $R(T).$ 
Furthermore, if for  $t \in T_1$ and some $\delta>0$ we weaken the continuity assumption to $F_t$ being continuous on $(-\delta,\delta)$, then $D({\bf a},P)=0$ for ${\bf all}$ sequences ${\bf a}=\{a_{t}\}_{t=1}^{\infty}$ such that
$\lim_{t \rightarrow \infty} |a_t|=0$.

\end{cor}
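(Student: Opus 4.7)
The plan is to deduce both statements directly from Proposition \ref{nasc} via a case split on whether the condition in part (i) of that proposition holds.

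First, fix an arbitrary sequence ${\bf a}=\{a_t\}_{t=1}^{\infty}$ in $R(T)$. If there exists some $t \in T$ with $P(Z_t \ge a_t)=0$ or $P(Z_t \le a_t)=0$, then condition (i) of Proposition \ref{nasc} applies and gives $D({\bf a},P)=0$ immediately. Otherwise, for every $t \in T$ we have $P(Z_t \ge a_t)>0$ and $P(Z_t \le a_t)>0$, and it suffices (by part (ii) of Proposition \ref{nasc}) to verify that $\sum_{t \in T} P(Z_t \ne a_t) = \infty$. This is where the continuity hypothesis enters: for each $t \in T_1$, continuity of $F_t$ at $a_t$ gives $P(Z_t=a_t)=0$, hence $P(Z_t \ne a_t)=1$. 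Since $T_1$ is infinite, $\sum_{t \in T_1} P(Z_t \ne a_t) = \infty$ and therefore $\sum_{t \in T} P(Z_t \ne a_t) = \infty$. So Proposition \ref{nasc}(ii) yields $D({\bf a},P)=0$, proving the first assertion.

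For the second assertion, the argument is essentially identical once one observes that $a_t \to 0$ means there exists $N$ such that $|a_t|<\delta$ for all $t \ge N$. Proceed with the same case split: in the non-trivial case, for every $t \in T_1$ with $t \ge N$, the assumed continuity of $F_t$ on $(-\delta,\delta)$ gives continuity of $F_t$ at $a_t$, hence $P(Z_t=a_t)=0$ and $P(Z_t \ne a_t)=1$. Since $T_1$ is an infinite subset of $\mathbb{N}$, the set $T_1 \cap \{t : t \ge N\}$ is still infinite, so again $\sum_{t \in T} P(Z_t \ne a_t) = \infty$ and Proposition \ref{nasc}(ii) concludes the argument.

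There is no real obstacle here; the statement is essentially a corollary in the most literal sense. The only point requiring a moment of thought is to remember to handle the trivial case in which condition (i) of Proposition \ref{nasc} holds directly, so that the argument via the divergent series is not vacuously blocked.
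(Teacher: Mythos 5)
Your proposal is correct and follows essentially the same route as the paper: continuity at $a_t$ gives $P(Z_t \ne a_t)=1$ for infinitely many $t$, so the series in Proposition \ref{nasc}(ii) diverges, with the trivial case where the positivity hypothesis fails handled separately via condition (i). The only difference is cosmetic — you place the case split up front, while the paper notes it at the end.
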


\begin{proof}
If  the distribution functions $F_t$ are continuous on $\mathbb{R}$ for all $t \in T_1,$ where $T_1$ is an infinite subset of $T$, then $P(Z_t \not= a_t)=1$ for all such t's and (\ref{infinite sum}) holds. Thus
$P(Z_t \ge a_t)>0$ and $P(Z_t \le a_t)>0$ for all $t \in T$, and part (ii) of Proposition \ref{nasc}, implies $D({\bf a},P)=0$. Of course, if it is not the case that $P(Z_t \ge a_t)>0$ and $P(Z_t \le a_t)>0$ for all $t \in T$, then we also have $D({\bf a},P)=0$. 

If the assumption of continuity is weakened as indicated, then an entirely similar argument applies for all sequences converging to zero.
\end{proof}

\begin{rem}
In the previous corollary continuity of the distributions $F_t, t \in T,$ played an important role in showing zero half-region depth, but it clearly is not a necessary condition. For example, if $\{Z_t: t \in T\}$ are independent random variables with $P(Z_t=\pm c_t)=d_t, t \in T $, where $\{c_t: t \in T\}$ are strictly positive constants, $\sum_{t \in T}d_t = \infty$, and $F_t, t \in T,$ is arbitrary otherwise, then Proposition \ref{nasc} immediately implies 
for any sequence ${\bf a}=\{a_t: t \in T\}$ 
$$
D({\bf a},P)=0.
$$
\end{rem}

It is also easy to formulate two immediate consequences of Corollary \ref{contin distrib}, where natural sequential half-region depths will always be zero for probabilities which behave well in many instances, and are important in many modeling situations. Since more restrictions in the definition of a half-region depth make it easier for the depth to be zero, it is interesting to observe that in both examples the class of evaluation maps used to define the depths is again countably infinite. In  the first we assume $P$ is a centered Gaussian probability measure on a separable Banach space with infinite dimensional support. Then, it is well known that there are many sequences of continuous linear functionals $\mathcal{A} = \{\alpha_t: t \in T\} \subseteq B^{*}$  that  are i.i.d. centered Gaussian random variables with $\int_B\alpha_t^2(x)dP(x) =1$, and for $P$-almost all $x \in B$
$$
\lim_{n \rightarrow \infty}||x- \sum_{t=1}^n \alpha_t(x)S\alpha_t||=0,
$$
where $||\cdot||$ is the norm on $B$, and for each $ \alpha \in B^{*}$, $S \alpha$ is the Bochner integral $\int_B x \alpha (x)dP(x)$. Hence, with $P$-probability one the sequence $\mathcal{A}= \{\alpha_t: t \in T\}$ determines $x \in B$ in the sense that above series converges to $x$, and we define the $\mathcal{A}$-half-region depth of a vector ${\bf a} \in B$ to be
\begin{equation}\label{alpha hr d}
D_{\mathcal{A}}({\bf a},P)= \min \{P(\alpha_t(x) \ge \alpha_t({\bf a})~ \forall t \in T), P(\alpha_t(x) \le \alpha_t({\bf a})~ \forall t \in T) \}.
\end{equation}

\begin{cor} \label{alpha  hr d1}
If $P$ is a centered Gaussian measure on a separable Banach space with infinite dimensional support, and $\mathcal{A}=\{ \alpha_t: t \in T\} \subseteq B^{*}$ is as above, then for all ${\bf a} \in B$
\begin{equation}\label{alpha hr d2}
D_{\mathcal{A}}({\bf a},P)= 0.
\end{equation} 
\end{cor}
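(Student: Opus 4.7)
The plan is to reduce Corollary \ref{alpha hr d1} directly to Corollary \ref{contin distrib}. The key observation is that, regarded as real-valued random variables on $(B,P)$, the family $\mathcal{A}=\{\alpha_t:t\in T\}$ is a countable sequence of i.i.d.\ standard Gaussian random variables, so the event inside $D_{\mathcal{A}}({\bf a},P)$ is exactly the sequence-space half-region depth of the deterministic sequence $(\alpha_t({\bf a}))_{t\in T}$ under the induced product probability on $R(T)$.

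First I would make this reduction precise. Let $\Phi\colon B\to R(T)$ be the measurable map $\Phi(x)=(\alpha_t(x))_{t\in T}$, and let $\tilde P=\Phi_*P$ be its pushforward. By hypothesis the coordinate evaluations $Z_t(\omega)=\omega_t$ are, under $\tilde P$, independent with $\int\alpha_t^2\,dP=1$, i.e.\ each is standard normal. Setting ${\bf a}_0=\Phi({\bf a})\in R(T)$, the definition (\ref{alpha hr d}) rewrites as
\[
D_{\mathcal{A}}({\bf a},P)=\min\bigl(\tilde P(Z_t\ge ({\bf a}_0)_t\ \forall\, t\in T),\ \tilde P(Z_t\le ({\bf a}_0)_t\ \forall\, t\in T)\bigr),
\]
which is the sequence-space half-region depth $D({\bf a}_0,\tilde P)$ in the sense of Proposition \ref{nasc}.

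Second I would invoke Corollary \ref{contin distrib} with $T_1=T$. Each $F_t$ is the standard normal distribution function, which is continuous on $\mathbb{R}$, and $T$ is countably infinite because the series expansion $x=\sum_t\alpha_t(x)S\alpha_t$ can recover every element of an infinite dimensional support only when infinitely many nonzero coordinates are available. The corollary therefore yields $D({\bf a}_0,\tilde P)=0$ for every ${\bf a}_0\in R(T)$, hence (\ref{alpha hr d2}) for every ${\bf a}\in B$.

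I do not anticipate any substantive obstacle: the whole point of the corollary is to record a striking Gaussian instance of Corollary \ref{contin distrib}. The only items I would double-check in writing are the measurability of $\Phi$ (immediate since $T$ is countable and each $\alpha_t\in B^*$ is Borel measurable) and the transfer of (\ref{alpha hr d}) through the pushforward, both of which are routine.
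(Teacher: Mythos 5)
Your proposal is correct and follows exactly the route the paper intends: the paper dismisses Corollary \ref{alpha  hr d1} as ``immediate from Corollary \ref{contin distrib} and the continuity of the relevant distribution functions,'' which is precisely your reduction of $D_{\mathcal{A}}({\bf a},P)$ to the sequence-space depth of $(\alpha_t({\bf a}))_{t\in T}$ under the product law of the i.i.d.\ standard normal functionals $\alpha_t$. Your write-up simply makes the pushforward step explicit; there is no substantive difference from the paper's argument.
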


In the second application of Proposition \ref{nasc} we let $X=\{X(t): t \in [0,1]\}$ be a symmetric non-degenerate stable process with stationary independent increments and cadlag sample paths on $[0,1].$ If $X$ is tied down at $t=0$, then Proposition \ref{blumenthal} below shows that the half-region depth of every cadlag path on $[0,1]$ is zero with respect to $P$, and here we examine what might be considered a natural depth for the increments of these processes. Unfortunately, this depth is also zero for every function on $[0,1]$.

\begin{cor}\label{I hr d}
Let $\mathcal{I}= \{I_j=[u_j,v_j], j \ge 1\}$ consist of disjoint intervals of $[0,1]$, and define the increment half-region depth for every function $h$ on $[0,1]$ with respect to $P=\mathcal{L}(X)$ and $\mathcal{I}$  by
$$
D_{\mathcal{I}}(h,P)=\min \{P(X(I_j) \ge h(I_j) ~\forall~j\ge 1), P(X(I_j) \le h(I_j) ~\forall~j\ge 1) \},
$$
where $f(I_j)=f(v_j)-f(u_j)$ for every function $f $ on $[0,1]$. Then, 
\begin{equation}\label{I hr d2}
D_{\mathcal{I}}({\bf a},P)= 0.
\end{equation} 
\end{cor}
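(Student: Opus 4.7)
The plan is to reduce Corollary \ref{I hr d} directly to Corollary \ref{contin distrib} by viewing the sequence of increments of $X$ over the intervals in $\mathcal{I}$ as a sequence of independent real-valued random variables indexed by $T = \mathbb{N}$. Setting $Z_j := X(I_j) = X(v_j) - X(u_j)$ and $a_j := h(I_j) = h(v_j) - h(u_j)$, the goal is to check that $\{Z_j : j \ge 1\}$ satisfies the hypotheses of Corollary \ref{contin distrib}, whose conclusion then reads exactly as $D_{\mathcal{I}}(h, P) = 0$ for every function $h$ on $[0,1]$.

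First I would verify independence. Since the intervals $I_j$ are pairwise disjoint, one can relabel them to be ordered along $[0,1]$ (or partition them into finitely many ordered subfamilies) and then the stationary independent increments property of $X$ immediately gives that the $Z_j$ are mutually independent. Note that this step is completely unaffected by whether $X$ is tied down at $0$; the independent increments structure is all we use.

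Next I would verify that each $Z_j$ has a continuous distribution function on $\mathbb{R}$. By stationarity of increments, $Z_j$ has the same law as $X(v_j - u_j) - X(0)$, which is a symmetric $\alpha$-stable random variable with scale $(v_j - u_j)^{1/\alpha} > 0$. Non-degenerate stable laws are known to possess a smooth (in particular continuous) density on $\mathbb{R}$, so each distribution function $F_j$ is continuous on all of $\mathbb{R}$. Applying Corollary \ref{contin distrib} with $T_1 = T = \mathbb{N}$ to the independent sequence $\{Z_j\}$ and the real sequence $\{a_j\}$ then yields
$$
D_{\mathcal{I}}(h,P) = \min\bigl(P(Z_j \le a_j \ \forall\, j),\, P(Z_j \ge a_j \ \forall\, j)\bigr) = 0,
$$
which is the desired conclusion.

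There is no substantive obstacle: the corollary is essentially a repackaging of Corollary \ref{contin distrib} in the language of increments of a stable process, and the only points that require explicit verification are the two routine facts above (disjointness implies independence of the increments, and non-degenerate symmetric stable laws are continuous). The one thing to be mildly careful about is that $h$ is allowed to be an arbitrary function on $[0,1]$, but since $a_j = h(v_j) - h(u_j)$ is a real number for any such $h$, the argument goes through without any regularity assumption on $h$.
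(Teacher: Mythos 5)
Your proposal is correct and follows exactly the paper's route: the paper likewise obtains Corollary \ref{I hr d} as an immediate consequence of Corollary \ref{contin distrib}, using that the increments $X(I_j)$ over disjoint intervals are independent symmetric stable random variables with continuous distribution functions. Your additional verifications (independence from the disjointness of the intervals and continuity of non-degenerate stable laws) are precisely the ``continuity of the relevant distribution functions'' the paper invokes.
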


As mentioned above, both Corollaries \ref{alpha  hr d1} and \ref{I hr d} are immediate from Corollary \ref{contin distrib}, and the continuity of the relevant distribution functions.

The next proposition will allow us to obtain several more typical examples of ``zero half-region depth''. 

\begin{prop}\label{difference depth} Let  $\{X(t)\colon t\in T\}$ and $\{Y(t):t \in T\}$ be i.i.d  stochastic processes on $(\Omega, \mathcal{F},P)$, all of whose sample paths are in the linear space of functions $M(T)$. If $h \in M(T)$ and  
\begin{align}\label{X-Ycond}
P(X-Y\preceq_{S} 0)=0
\end{align}
for some subset $S$ of $T_0$, then 
$D(h,P)=0$.
\end{prop}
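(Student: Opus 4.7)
The plan is to argue by contradiction, showing that if both probabilities in the definition of $D(h,P)$ were positive, then the independence of $X$ and $Y$ would force the event $\{X-Y \preceq_S 0\}$ to have positive probability, contradicting the hypothesis. The argument is short and uses only independence together with the measurability reduction (\ref{Tcond3}).

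Concretely, suppose for contradiction that $D(h,P) > 0$, which means
$$P(X \preceq h) > 0 \quad \text{and} \quad P(X \succeq h) > 0.$$
Since $X$ and $Y$ are independent and identically distributed,
$$P(X \preceq h,\; Y \succeq h) = P(X \preceq h)\,P(Y \succeq h) = P(X \preceq h)\,P(X \succeq h) > 0.$$
Next I would use (\ref{Tcond3}) to pass from the orderings on $T$ to orderings on the countable set $T_0$: on the event $\{X \preceq h\} \cap \{Y \succeq h\}$ we have $X(t) \le h(t) \le Y(t)$ for every $t \in T_0$, hence $X(t) - Y(t) \le 0$ for every $t \in T_0$. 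Because $S \subseteq T_0$, this forces $X - Y \preceq_S 0$ on that event, so
$$\{X \preceq h\} \cap \{Y \succeq h\} \subseteq \{X - Y \preceq_S 0\}.$$
Taking probabilities and applying the hypothesis gives
$$0 = P(X - Y \preceq_S 0) \ge P(X \preceq h)\,P(X \succeq h) > 0,$$
which is the desired contradiction. Hence $D(h,P) = 0$.

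The only conceptual step that needs a moment of care is the reduction from $T$ to $T_0$ so that the pointwise inequalities $X \le h \le Y$ can be combined measurably; this is exactly what (\ref{Tcond3}) provides. I do not expect any further obstacle: the i.i.d.\ hypothesis is used only to replace $P(Y \succeq h)$ by $P(X \succeq h)$, and independence is used only to factor the joint probability. Note that the same argument would equally show $P(Y - X \preceq_S 0) = 0$ implies the conclusion, reflecting the symmetric roles of $X$ and $Y$.
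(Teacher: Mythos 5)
Your proof is correct and is essentially the paper's argument: assume positive depth, factor $P(X\preceq h)P(X\succeq h)$ into a joint probability using independence and identical distribution, reduce to $T_0$ via (\ref{Tcond3}), and observe the resulting event is contained in $\{X-Y\preceq_S 0\}$, contradicting (\ref{X-Ycond}). The only cosmetic difference is that the paper pairs $h\preceq X$ with $Y\preceq h$ and so lands on $P(Y-X\preceq_S 0)=0$ (the same as the hypothesis by exchangeability), while your pairing hits the stated hypothesis directly.
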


\begin{proof}
If the depth of $h \in M(T)$ is positive, then the product, \hfill\break
$P(h\preceq X)\cdot P(X\preceq h)$, is positive. So, since we always are assuming (\ref{Tcond}), (\ref{Tcond3}) and (\ref{X-Ycond}), we then have  
\begin{align}0<P&(h\preceq_{T_0} X)\cdot P(X\preceq_{T_0} h)=P(h\preceq_{T_0} X, Y\preceq_{T_0} h)\notag \\
&\le P(Y\preceq_{T_0} X)\le P(Y-X\preceq_{S} 0)=0.
\end{align}
\end{proof}

\begin{cor}\label{ind incr}Let $X$ be an independent increment process with paths in the Skorohod space $D[0,1]$ such that 
\begin{enumerate}
\item\label{cont incr} the increments have a continuous distribution, and
\item 
$P(X(0)=0)=1$.
\end{enumerate} If $h\in D[0,1]$, then $D(h,P)=0$.
\end{cor}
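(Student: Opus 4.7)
The plan is to apply Proposition \ref{difference depth}. I would introduce an independent copy $Y$ of $X$ on the same (suitably enlarged) probability space and set $Z = X - Y$. Then $Z$ has cadlag paths, independent increments, and $Z(0) = 0$ almost surely; and because $X, Y$ are i.i.d., $Z \stackrel{d}{=} -Z$ (so $Z$ is symmetric as a process) and each increment $Z(t) - Z(s)$, being the convolution of two independent continuously distributed variables, is itself continuously distributed. Fix a countable dense $T_0 \subseteq [0,1]$ containing $0$ and $1$. The cadlag property gives $\sup_{t \in T_0} Z(t) = \sup_{t \in [0,1]} Z(t)$ pathwise, so Proposition \ref{difference depth} (with $S = T_0$) reduces the corollary to showing $P(\sup_{t \in [0,1]} Z(t) \le 0) = 0$, equivalently $P(\sup Z > 0) = 1$.

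For this I would invoke Blumenthal's $0$-$1$ law at $t = 0$: since $Z$ has independent increments and starts at the deterministic point $0$, it is a Markov process whose germ $\sigma$-algebra $\bigcap_{\delta > 0} \sigma(Z(s) : s \in [0,\delta])$ is $P$-trivial. The event
\[
A = \{\omega : \text{there exists } \delta > 0 \text{ with } Z(s,\omega) \le 0 \text{ for all } s \in [0,\delta]\}
\]
lies in $\sigma(Z(s) : s \in [0,\delta'])$ for every $\delta' > 0$ (any witnessing $\delta$ may be shrunk to a rational $q \in (0, \delta']$, expressing $A$ as a countable union of events measurable with respect to $Z|_{[0,q]}$), hence $A$ belongs to the germ and $P(A) \in \{0,1\}$. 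By symmetry of $Z$ the analogous event $A^{*}$ with $\ge 0$ in place of $\le 0$ has the same probability.

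The hard step is ruling out $P(A) = 1$. In that case $A \cap A^{*}$ would have probability one, so almost surely there would exist a random $\delta > 0$ with $Z(s) = 0$ for all $s \in [0,\delta]$. For a sequence $t_n \downarrow 0$ we have $\{\delta \ge t_n\} \subseteq \{Z(t_n) = 0\}$, and the continuous-distribution hypothesis gives $P(Z(t_n) = 0) = 0$, so $P(\delta \ge t_n) = 0$ for every $n$. Letting $n \to \infty$ forces $P(\delta > 0) = 0$, contradicting $P(A) = 1$. Therefore $P(A) = 0$, so almost surely $Z$ attains strictly positive values in every right-neighborhood of $0$; in particular $\sup_{[0,1]} Z > 0$ a.s., and the proof is complete.
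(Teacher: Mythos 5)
Your proof is correct, but it takes a genuinely different route from the paper's. Both arguments start identically: symmetrize via $Z=X-Y$ and reduce, through Proposition \ref{difference depth}, to showing $P(Z\preceq_S 0)=0$ for a suitable countable $S$. From there the paper works only along $S=\{1/k\colon k\ge 1\}$: telescoping $Z(1/k)=\sum_{j\ge k}\Delta_j(Z)$ with $\Delta_j(Z)=Z(1/j)-Z(1/(j+1))$, it observes that $\{Z\preceq_S 0\}$ having positive probability forces a tail event of the independent increments $\Delta_j(Z)$ to have positive probability, invokes Kolmogorov's zero--one law together with the symmetry of $Z$ to get $Z(1/k)=0$ eventually a.s., and then contradicts Borel--Cantelli because $P(\Delta_k(Z)\neq 0)=1$ for every $k$. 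You instead prove the stronger statement $P(A)=0$ for the germ event $A=\{Z\le 0\ \text{on some}\ [0,\delta]\}$, using triviality of the germ $\sigma$-field of $Z$ at $0$ plus continuity of the laws of $Z(t_n)$; this is essentially the mechanism of the paper's Proposition \ref{blumenthal} (whose proof also extracts a contradiction from $E_n\cap F_n=\{X=h\ \text{on}\ [0,1/n]\}$ having positive probability), transplanted to the symmetrized process. The one step you should not pass over so quickly is the zero--one law itself: $Z$ has independent but not necessarily stationary increments and need not be Feller, so Blumenthal's law in its textbook form does not literally apply; what you need is the zero--one law for c\`adl\`ag additive processes started at $0$, which does hold via the short argument that a germ event lies in $\sigma(Z_s\colon s\le t)$ for every $t>0$, hence is independent of $\sigma(Z_v-Z_u\colon t\le u\le v)$ for every $t>0$, while $Z(u)\to 0$ a.s.\ as $u\downarrow 0$ makes these increment fields generate $\sigma(Z_v\colon v>0)$ modulo null sets, so the event is independent of itself. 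The paper's route buys elementarity (only Kolmogorov's zero--one law for a countable independent family and Borel--Cantelli); yours, once the germ triviality is justified, buys the sharper conclusion that a.s.\ $Z$ is strictly positive somewhere in every right neighborhood of $0$. Two cosmetic points: define the random $\delta$ measurably (e.g.\ as a supremum), or simply bound $P(A\cap A^{*})$ by the countable union over rational $q>0$ of the null events $\{Z\equiv 0\ \text{on}\ [0,q]\}$, which also sidesteps the endpoint issue in $\{\delta\ge t_n\}\subseteq\{Z(t_n)=0\}$.
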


\begin{proof} Let $Z=X-Y$, where $X$ and $Y$ are defined on the probability space $(\Omega,\mathcal{F},P)$, $Y$ is an independent copy of $X$, and $X$ and $Y$ have sample paths in $D[0,1]$. Using Proposition \ref{difference depth}, with $T_0$ the rational numbers in $[0,1]$ and $S=\{\frac{1}{k}: k=1,2,\cdots\},$ we only have to check that $P(Z\preceq_S 0)=0$. We'll assume not. But, by the (right) continuity at $t=0$ and telescoping terms  we have
\begin{align}\label{incre}
Z(\frac{1}{k})=\lim_{ r \rightarrow \infty}[Z(\frac{1}{k})-Z(\frac{1}{r+1})] = \lim_{r \rightarrow \infty} \sum_{j=k}^r\Delta_j(Z)=\sum_{j=k}^{\infty}\Delta_j(Z),
\end{align}
where $\Delta_j(Z)=[Z(\frac{1}{j})-Z(\frac{1}{j+1})].$ Therefore, by our choice of $S$ and (\ref{incre})
\begin{align*}&0<P(Z\preceq_S 0)=P(\sum_{j=k}^{\infty}\Delta_j(Z)\le 0,\forall k\ge 1)\\
&\le P(\sum_{j=k}^{\infty}\Delta_j(Z) \le 0, \text{ eventually  in } k).
\end{align*}
This last event is in the tail $\sigma$-field of $\{Z(\frac1{j})-Z(\frac1{j+1})\colon j\ge 1\}$, so by Kolmogorov's zero-one law and the symmetry of $Z$ , we have
\[P(\sum_{j=k}^{\infty}\Delta_j(Z)\le 0, \text{ eventually  in } k)=  
 P(\sum_{j=k}^{\infty}\Delta_j(Z)\ge 0, \text{ eventually  in } k),\]  
and both probabilities are one.
Hence, by (\ref{incre}) we have  
$$
P(Z(\frac{1}{k})=0 \text{ eventually in }k)=1,
$$ 
and therefore $P(Z(1/k)-Z(1/k+1)=0 \text{ eventually in }k)=1$.
By the independence of the increments this last statement is equivalent to 
\[\sum_{k=1}^{\infty}P(Z(1/k)-Z(1/k+1)\neq 0)<\infty.
\]
Since each term is $1$, we have a contradiction.
\end{proof}

\begin{rem}\label{rem1} Let $X=\{X(t): t \in [0,1]\}$ be a symmetric stable process with parameter $r \in (0,2]$, and stationary independent increments with paths in $D[0,1]$. If we also have $P(X(0)=0)=1$, then the conclusion of Corollary \ref{ind incr} immediately holds. If $r=2$ and $X$ is Brownian motion with continuous sample paths, then the result also holds in that setting. However, if $X$ is a Poisson process with parameter $\lambda>0$, then the first condition of Corollary (\ref{ind incr}) does not hold. And, if $\xi$ has an exponential distribution with mean $\lambda$,  then 
\[P(X(t)\le 0 \text{ for all } t\in[0,1])=P(\xi>1)>0.
\]
Therefore, the half-space depth of the $0$ function is positive. Of course, the same conclusion is valid for compound Poisson processes starting at zero with probability one.
\end{rem}

\begin{cor}\label{brown sheet}Let $X=\{X(t): t=(t_1,t_2) \in [0,1]\times[0,1]\}$ be a centered Brownian sheet with covariance
\begin{align}\label{bro cov}
E(X(t_1,t_2)X(s_1,s_2))=\min\{s_1,t_1\} \min\{s_2,t_2\},
\end{align}
and continuous paths on $T=[0,1] \times[0,1]$. If $h$ is a continuous function on $T$ and $P$ is the law of $X$, then the half-region depth $D(h,P)=0.$
\end{cor}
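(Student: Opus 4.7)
The plan is to reduce the statement to Corollary \ref{ind incr} by applying Proposition \ref{difference depth} to an independent copy $Y$ of $X$ and the difference process $Z=X-Y$. I choose $T_0$ to be any countable dense subset of $T=[0,1]\times[0,1]$ that contains the ``slice''
\[
S=\Big\{\Big(\tfrac{1}{k},1\Big):k\ge 1\Big\}.
\]
(Such a $T_0$ satisfies (\ref{Tcond}) on $M(T)=C(T)$ by continuity.) By Proposition \ref{difference depth} it suffices to show that $P(Z\preceq_S 0)=0$.

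Next, I restrict attention to the one-dimensional section $\widetilde X(t):=X(t,1)$, $t\in[0,1]$, and likewise $\widetilde Y(t):=Y(t,1)$. Using (\ref{bro cov}),
\[
E\bigl(\widetilde X(s)\widetilde X(t)\bigr)=\min(s,t)\cdot 1=\min(s,t),
\]
so $\widetilde X$ is a standard Brownian motion on $[0,1]$, with $\widetilde X(0)=0$ a.s. and continuous sample paths (hence in $D[0,1]$); the same holds for the independent copy $\widetilde Y$. Therefore $\widetilde X$ satisfies the hypotheses of Corollary \ref{ind incr}: it is an independent-increment process on $[0,1]$ whose increments are (non-degenerate) Gaussian, and it is tied down at $0$.

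The final step is to apply the proof of Corollary \ref{ind incr} verbatim to the pair $(\widetilde X,\widetilde Y)$. That proof establishes precisely that
\[
P\Bigl(\widetilde X(\tfrac1k)-\widetilde Y(\tfrac1k)\le 0\ \text{for all } k\ge 1\Bigr)=0,
\]
via the telescoping identity (\ref{incre}) and the Kolmogorov zero-one law applied to the independent symmetric increments $\widetilde X(\tfrac1j)-\widetilde Y(\tfrac1j)-[\widetilde X(\tfrac1{j+1})-\widetilde Y(\tfrac1{j+1})]$. Translating back to $Z$, this is exactly $P(Z\preceq_S 0)=0$, and Proposition \ref{difference depth} then yields $D(h,P)=0$ for every continuous $h$ on $T$.

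There is no serious obstacle here: the only substantive point is the observation that a single coordinate section of the Brownian sheet at level $t_2=1$ is a standard Brownian motion tied down at $0$ with continuous paths, after which Proposition \ref{difference depth} and the already-proved Corollary \ref{ind incr} do all the work. The same argument would succeed with any level $t_2=c>0$ in place of $t_2=1$, which may be worth a brief remark in the proof.
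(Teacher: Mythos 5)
Your proposal is correct and takes essentially the same route as the paper: reduce via Proposition \ref{difference depth} to showing that the difference process $Z=X-Y$ is nonpositive, with probability zero, along a countable one-parameter section of the sheet on which it is a tied-down Brownian-motion-type process, and then invoke the argument of Corollary \ref{ind incr} (Remark \ref{rem1}). The only cosmetic difference is that you slice at $t_2=1$, where the section is exactly standard Brownian motion, whereas the paper restricts to the diagonal $(t_1,t_1)$, where it is a time-changed Brownian motion; both choices work equally well.
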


\begin{proof} Let $Z=X-Y$, where $X$ and $Y$ are defined on the probability space $(\Omega,\mathcal{F},P)$, $Y$ is an independent copy of $X$, and $X$ and $Y$ have sample paths in $C(T)$. Let $T_0$ be the subset of $T$ consisting of points with both coordinates rational numbers in $[0,1]$ and let $S=\{t \in T_0: t=(t_1,t_1)\}$. Using Proposition \ref{difference depth}, we only have to check that $P(Z\preceq_S 0)=0$. We'll assume not. 
Then,
$$
0<P(Z(t) \le 0, ~{\rm {for~all}}~t \in S)=P(B(u)\le 0 ~{\rm {for~all}~} u \in [0,1]\cap Q),
$$
where $Q$ is the rational numbers and $B(u)=Z(u,u), u \in [0,1]$. Since $\{B(u): u \in [0,1]\}$ is a Brownian motion process with continuous sample paths and $P(B(0)=0=1$, we have
$$
P(B(u)\le 0 ~{\rm {for~all}~} u \in [0,1]\cap Q)=P(B(u)\preceq_{[0,1]} 0)=0,
$$
where the last equality follows from Remark \ref{rem1}. 
\end{proof}

The next result applies to many Markov processes with or without independent increments. 

\begin{prop}\label{blumenthal}Assume the stochastic process $X=\{X_{t}\colon 0\le t\le 1\}$ has sample paths in the Skorohod space $D[0,1]$ and it satisfies 
\begin{enumerate}
\item the Blumenthal zero-one law at $t=0$, i.e., for every $A\in\mathcal{F}_{0}^{+}:=\cap_{t>0}\mathcal{F}_{t}$ we have $P(A)=0 \text{ or } 1$, where $\mathcal{F}_t= \cup_{0\le s\le t}\sigma(X_s)$ and $\sigma(X_s)$ is the minimal sigma-field making $X_s$ measurable, and 
\item for every $t>0$, $X(t)$ has a continuous distribution function. 
\end{enumerate}
Then, the half-region depth $D(h,P)=0$ for every $h \in D[0,1]$. 

\end{prop}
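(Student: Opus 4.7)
The natural plan is to reduce the problem, via the Blumenthal zero-one law, to the event that the paths of $X$ agree with $h$ near $0$, and then rule this out using the continuity of the one-dimensional distributions. Concretely, fix $h \in D[0,1]$, take $T_0$ to be a countable dense subset of $[0,1]$ containing a sequence $t_n \downarrow 0$ with $t_n > 0$, and define the two ``germ'' events
\begin{equation*}
E_{\le} = \bigcup_{m \ge 1} \{ X(t) \le h(t)\ \forall\, t \in T_0 \cap (0,1/m] \}, \qquad E_{\ge} = \bigcup_{m \ge 1} \{ X(t) \ge h(t)\ \forall\, t \in T_0 \cap (0,1/m] \}.
\end{equation*}
The first step is to show that $E_{\le}$ and $E_{\ge}$ lie in $\mathcal{F}_0^{+}$. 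Because the inner events in the union over $m$ are increasing in $m$, and the $m$-th of them is $\mathcal{F}_{1/m}$-measurable, for every fixed $n$ we can rewrite $E_{\le}$ as the union over $m \ge n$, which lies in $\mathcal{F}_{1/n}$; intersecting over $n$ gives $E_{\le} \in \mathcal{F}_0^+$, and symmetrically for $E_{\ge}$.

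The second step applies the Blumenthal zero-one law to obtain $P(E_{\le}), P(E_{\ge}) \in \{0,1\}$. Now observe the obvious inclusions $\{X \preceq h\} = \{X \preceq_{T_0} h\} \subseteq E_{\le}$ and $\{X \succeq h\} \subseteq E_{\ge}$, which come from the measurability reduction (\ref{Tcond3}). Hence if either $P(X \preceq h) = 0$ or $P(X \succeq h) = 0$ we are already done, so we may assume both are strictly positive, which forces $P(E_{\le}) = P(E_{\ge}) = 1$, and therefore $P(E_{\le} \cap E_{\ge}) = 1$.

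The third step extracts the contradiction. On $E_{\le} \cap E_{\ge}$ there exists a (random) $\delta(\omega) > 0$ such that $X(t,\omega) = h(t)$ for every $t \in T_0 \cap (0,\delta(\omega))$; in particular, for the chosen sequence $t_n \downarrow 0$ in $T_0$, the event $\{X(t_n) = h(t_n)\ \text{for all sufficiently large } n\}$ has $P$-probability one. On the other hand, by hypothesis (2) each $X(t_n)$ has a continuous distribution, so $P(X(t_n) = h(t_n)) = 0$ for every $n$. Bounding the probability of ``$X(t_n) = h(t_n)$ eventually'' by $\inf_N P(X(t_N) = h(t_N)) = 0$ gives the contradiction, completing the proof that at least one of $P(X \preceq h)$, $P(X \succeq h)$ vanishes, hence $D(h,P) = 0$.

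The main technical obstacle is the first step, verifying $E_{\le} \in \mathcal{F}_0^{+}$; it is critical here that the ``exists a neighborhood'' quantifier translates into a countable union of increasing events whose cofinal tail sits inside $\mathcal{F}_{1/n}$ for every $n$, which is what makes the Blumenthal zero-one law applicable. Everything after that step is essentially bookkeeping, using that cadlag paths are determined by their values on $T_0$ and that continuous one-dimensional distributions assign mass zero to any single point.
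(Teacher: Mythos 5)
Your argument is correct and follows essentially the same route as the paper's proof: germ events describing eventual domination of $h$ as $t\downarrow 0$, shown to lie in $\mathcal{F}_0^{+}$ via the increasing-union trick, the Blumenthal zero-one law forcing both to have probability one, and continuity of the one-dimensional marginals ruling out $X=h$ near zero (the paper closes instead by choosing a fixed $n$ with $P(E_n\cap F_n)>1/2$, while you use a random $\delta$ and a null-set argument -- an inessential variation). One small imprecision: the probability of $\{X(t_n)=h(t_n)\ \text{eventually}\}$ is not in general bounded by $\inf_N P(X(t_N)=h(t_N))$; bound it instead by $\liminf_N P(X(t_N)=h(t_N))$ (Fatou for sets) or note that the event is contained in the countable union $\bigcup_N\{X(t_N)=h(t_N)\}$ of null sets -- either way the conclusion stands since every term is zero.
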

\begin{proof}
If $D(h,P)>$, then
\begin{align}\label{>0}
P(X(\cdot) &\ge_{[0,1]} h(\cdot))>0 
\end{align}
and
\begin{align}\label{<0}
P(X(\cdot) &\le_{[0,1]} h(\cdot))>0.
\end{align}
For $n \ge 1$, let
\[
E_n=\{ X(\cdot) \ge_{[0,1/n]} h(\cdot)\},
\] 
and
\[
F_n=\{ X(\cdot) \le_{[0,1/n]} h(\cdot)\}.
\] 
Then, for every integer $k$
$$
E=\{X(t) \ge h(t) ~\rm {eventually~as}~ t \downarrow 0\} = \cup_{n \ge k}E_n, 
$$
and
$$
F=\{X(t) \le h(t) ~\rm {eventually~as}~ t \downarrow 0\} = \cup_{n \ge k}F_n. 
$$
This implies $E \in \mathcal{F}_{1/k}, F \in \mathcal{F}_{1/k}$ for all $k \geq 1$, and therefore $E,F \in \mathcal{F}_{0}^{+}=\cap_{k=1}^{\infty}\mathcal{F}_{1/k}$.
Now (\ref{>0}) implies $P(E)>0$ and (\ref{<0}) implies $P(F)>0$, so the Blumenthal zero-one law implies
$P(E)=P(F)=1$.
Since the events $E_n$ and $F_n$ increase in $n$, we  have that there exists a $k_0$ such that $n \ge k_0$ implies 
$P(E_n) > 3/4$ and $P(F_n) > 3/4$. Hence
$$
P(E_n \cap F_n) > 1/2 \text{ for all } n \ge k_0.
$$
Since $E_n \cap F_n=\{X(t)=h(t)~ \forall  ~t \in [0,1/n]\}$, this is a contradiction to the fact that $X(t)$ has a continuous distribution for all $t>0$. Thus the half-region depth of $h \in D[0,1]$ must be zero.
\end{proof}

\subsection{Eliminating Half-Region Zero Depth By Smoothing}

Although sample continuous Brownian motion, tied down to be zero at $t=0$ with probability one, assigns zero half-region depth to all functions $h \in C[0,1]$, by starting the process randomly with a density changes things dramatically. This follows immediately from the next proposition, and hence in order to be assured half-region depth is non-trivial, we use smoothing in the results that follow in subsequent sections. Moreover, the precise assumptions used for smoothing in these later results are also important in other parts of their proofs. The smoothed stochastic process $\{X=\{ X(t): t \in T\}$  will be such that 
\begin{align}\label{X Z smooth}
X(t) = Y(t) + Z, t \in T,
\end{align}
where $Z$ is a real valued random variable independent of the process $Y=\{Y(t): t \in T\}$, $Z$ has density $f_Z(\cdot)$  on $\mathbb{R}$, $Y$ has sample paths in the linear space $M(T)$, and we are assuming $M(T)$ is such that (\ref{Tcond}) holds. Of course, then (\ref{Tcond2}) also holds, and since we are assuming $M(T)$ contains the constant functions on $T$, $X$ also has its sample paths in $M(T).$

\begin{prop}\label{smooth} Let $X(t) = Y(t) + Z, t \in T,$ where $Y= \{Y(t): t \in T\}$ has sample paths in the linear space $M(T)$ satisfying $(\ref{Tcond})$ and $Z$ is independent of $Y$ with density $f_Z$. If $f_Z>0$ a.s. with respect to Lebesgue measure on $\mathbb{R}$ and $h \in M(T)$, then the
half-region depth  of $h$ determined by $\{X(t): t \in T\}$ is strictly positive.
 \end{prop}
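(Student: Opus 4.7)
The plan is to show directly that both quantities in the minimum defining $D(h,P)$ are strictly positive, by reducing the two events to tail events for the single real random variable $Z$ and then invoking the a.e.\ positivity of $f_Z$.

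First I would fix $h \in M(T)$ and observe that, since $M(T)$ is a linear space containing the constant functions and $h, Y(\cdot,\omega) \in M(T)$ for $P$-a.e.\ $\omega$, the function $h - Y(\cdot,\omega)$ lies in $M(T)$ almost surely. By (\ref{Tcond}) and (\ref{Tcond2}) applied to $h - Y(\cdot,\omega)$, the quantities
\[
M_1(\omega) := \sup_{t \in T_0}\bigl(h(t) - Y(t,\omega)\bigr) = \sup_{t \in T}\bigl(h(t) - Y(t,\omega)\bigr),
\]
\[
M_2(\omega) := \inf_{t \in T_0}\bigl(h(t) - Y(t,\omega)\bigr) = \inf_{t \in T}\bigl(h(t) - Y(t,\omega)\bigr)
\]
are both finite a.s. Being suprema/infima over the countable set $T_0$ of measurable functions of $\omega$, they are $\sigma(Y)$-measurable random variables.

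Next, using (\ref{Tcond3}) applied to $X = Y + Z \in M(T)$, the event $\{X \succeq h\}$ equals $\{X(t) \ge h(t)\text{ for all }t \in T_0\}$, which unwinds to $\{Z \ge h(t) - Y(t)\text{ for all }t \in T_0\} = \{Z \ge M_1\}$. Similarly $\{X \preceq h\} = \{Z \le M_2\}$. Now, since $Z$ is independent of $Y$ and hence of $(M_1, M_2)$, conditioning gives
\[
P(X \succeq h) = E\!\left[\int_{M_1}^{\infty} f_Z(z)\,dz\right], \qquad P(X \preceq h) = E\!\left[\int_{-\infty}^{M_2} f_Z(z)\,dz\right].
\]
Since $M_1$ and $M_2$ are finite a.s.\ and $f_Z > 0$ almost everywhere with respect to Lebesgue measure, both integrands are strictly positive with probability one, so both expectations are strictly positive. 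Hence $D(h,P) = \min\{P(X \succeq h), P(X \preceq h)\} > 0$.

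The only delicate point in this plan is the reduction to $T_0$, i.e.\ making sure that the events of interest are genuinely measurable and that the suprema and infima defining $M_1, M_2$ are finite-valued random variables. That is precisely what the standing assumptions (\ref{Tcond})--(\ref{Tcond3}) on $M(T)$ were designed to guarantee, so no real obstacle remains once this reduction is performed; the positivity of $f_Z$ does the rest.
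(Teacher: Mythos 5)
Your proof is correct, and it takes a mildly different route from the paper's: the conditioning is reversed. The paper conditions on $Z=u$, writes $P(X \succeq h)=\int P(Y(t)\ge h(t)-u~\forall t)\,f_Z(u)\,du$, picks a constant $c$ with $P(\|Y\|_{\infty}\le c)>\tfrac12$, and notes that for $u>2c+\|h\|_{\infty}$ the conditional probability exceeds $\tfrac12$, so that $P(X\succeq h)\ge \tfrac12\int_{2c+\|h\|_{\infty}}^{\infty}f_Z(u)\,du>0$. You instead condition on $Y$: using the linearity of $M(T)$ and (\ref{Tcond})--(\ref{Tcond3}) you collapse the half-region events to the one-dimensional tail events $\{Z\ge M_1\}$ and $\{Z\le M_2\}$ with $M_1=\sup_{T}(h-Y)$ and $M_2=\inf_{T}(h-Y)$ a.s.\ finite and $\sigma(Y)$-measurable, and then independence plus the a.e.\ positivity of $f_Z$ makes each conditional tail probability a.s.\ strictly positive, hence each expectation strictly positive. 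Both arguments rest on the same three ingredients (boundedness of paths in $M(T)$, independence of $Z$, strict positivity of the density), so this is a repackaging rather than a new idea; your version is slightly cleaner in that it avoids choosing the constant $c$ and makes explicit exactly where the standing assumptions enter (measurability via $T_0$ and finiteness of $M_1,M_2$), while the paper's version has the minor advantage of producing an explicit quantitative lower bound for the depth in terms of $c$, $\|h\|_{\infty}$ and the tail of $f_Z$. No gaps: the identification $\{X\succeq h\}=\{Z\ge M_1\}$ is valid because $Z\ge h(t)-Y(t)$ for all $t$ iff $Z$ dominates the supremum, and the Fubini step is justified by the independence of $Z$ and $Y$.
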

\begin{proof} Let $h \in M(T)$. Then, 
\begin{align}\label{sup contr}
P(X \succeq h) = \int_{-\infty}^{\infty} P(Y(t) \ge h(t)- u~\forall~t \in T~| Z=u) f_Z(u)du.
\end{align}
Since (\ref{Tcond}) holds there exists an constant $c>0$ such that $P(||Y||_{\infty} \le c)>\frac{1}{2}$ and hence for $u>2c+ ||h||_{\infty}$ we have
\begin{align}\label{sup contr 1}
P(Y(t) \ge h(t)-u~\forall~t \in T~| Z=u)\ge P(Y(t) \ge -2c~\forall~t \in T)>\frac{1}{2}.
\end{align}
Since $f_{Z}>0$ a.s., by combining (\ref{sup contr}) and (\ref{sup contr 1}) we have 
$$
P(X \succeq h) \ge \int_{2c+||h||_{\infty}}^{\infty} \frac{1}{2}f_Z(u)du>0.
$$
Similarly,  $P(X \preceq h)>0$ for all $h \in M(T)$, and hence
$D(h,P)>0$ for all $h \in M(T)$.
\end{proof}

\begin{rem} If $P(||Y||_{\infty} \le c)>0$ for all $c>0$, then it is easy to see from the proof of the previous proposition that the half-region depth could be strictly positive for some $h \in M(T)$ without
the density being strictly positive on all of $\mathbb{R}$. 
\end{rem}

\begin{rem} Let $T=[a,b], -\infty<a<b <\infty$, and assume $M(T)$ denotes the real-valued cadlag paths on $T$. If $X=\{X(t): t \in T\}$ has paths in $M(T)$, and $Z:=X(a)$ is independent of $\{Y(t)=X(t)-X(a), t  \in T\}$ with density $f_Z>0$  a.s. with respect to Lebesgue measure on $\mathbb{R}$,
then Proposition \ref{smooth} implies the half-region depth with respect to $P=\mathcal{L}(X)$ is strictly positive on $M(T).$ Hence, under these conditions no smoothing is required to be certain the depth is strictly positive.
\end{rem}

\section{Consistency for Empirical Half-region Depth}

The consistency result we prove depends on  two lemmas, which are also important for the $\sqrt n$ asymptotics we obtain in Theorem \ref{bdd prob clt}.The proof of consistency is an application of empirical process ideas involving the Blum-Dehardt Theorem and bracketing entropy.

Let $X(t) = Y(t) + Z, t \in T,$ where $Y= \{Y(t): t \in T\}$ has sample paths in the linear space $M(T)$ satisfying $(\ref{Tcond}),$ and $Z$ is independent of $Y$ with density $f_Z$. Also, assume $X_1,X_2, \cdots$ are i.i.d. copies of the process $X$ with sample paths in $M(T)$ and that $X,X_1,X_2,\cdots$ are defined on the probability space  $(\Omega,\mathcal{S},P)$. Then, with half-region depth and half-region empirical depth defined as in (\ref{HR}) and (\ref{EHR}), and since for real numbers $a,b,c,d$ 
\begin{align}\label{min-min}
|\min\{ a,b\} -\min\{c,d\}| \le |a-c|+|b-d|,
\end{align}
the classical strong law of large numbers implies for each $h \in M(T)$
\begin{align}\label{classicalLLN}
\lim_{n \rightarrow \infty} |D_{n}(h)-D(h)|=0 
\end{align}
with probability one. 
The theorem below refines (\ref{classicalLLN}) to be uniform over $h \in E$, where $E$ is a suitably chosen subset of $M(T)$.
\bigskip

\begin{notat}For a function $f: \Omega \rightarrow \bar{\mathbb{R}}$ we use the notation $f^{*}$ to denote a measurable cover function (see Lemma 1.2.1 \cite{vw}). 
\end{notat}

\begin{thm}\label{consist 1}
Let $X(t) = Y(t) + Z, t \in T,$ where $Y= \{Y(t): t \in T\}$ has sample paths in the linear space $M(T)$, 
and $Z$ is independent of $Y$ with density $f_Z(\cdot)$  on $\mathbb{R}$ that is absolutely continuous and its derivative $f_Z^{'}(\cdot)$ is  in $L_1(\mathbb{R})$. Also, assume $X_1,X_2, \cdots$ are i.i.d. copies of the process $X$ with sample paths in $M(T)$ and that $X,X_1,X_2,\cdots$ are defined on the probability space  $(\Omega,\mathcal{F},P)$.
If $E$ is subset of $M(T)$
such that for every $r>0$
\begin{align}\label{compE}
E_{r} = E\cap \{f \in M(T): ||f||_{\infty} \le r\}
\end{align}
is a sup-norm compact subset of $M(T)$,
then with probability one
\begin{align}\label{LLN}
 \lim_{n \rightarrow \infty} \sup_{h \in E}|D_n(h)-D(h)|^{*}=0. 
\end{align}
\end{thm}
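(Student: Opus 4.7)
The plan is to reduce (\ref{LLN}) via (\ref{min-min}) to establishing that $\sup_{h\in E}|\tfrac1n\sum_{j=1}^n I(X_j \succeq h) - P(X\succeq h)|^* \to 0$ a.s., together with the analogous statement for $\preceq$; measurability of each summand is supplied by (\ref{Tcond3}). Since $E$ need not be sup-norm bounded, I split $E = E_r \cup (E\setminus E_r)$ and treat the two pieces by different tools: a crude tail truncation on $E\setminus E_r$, and a finite $L_1$-bracketing on the compact piece $E_r$.

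For the tail, if $h\in E$ with $\|h\|_\infty > r$ then either $\sup_t h(t) > r$, giving $\{X\succeq h\}\subseteq\{\sup_t X(t) > r\}$, or $\inf_t h(t) < -r$, giving $\{X\preceq h\}\subseteq\{\inf_t X(t) < -r\}$. In either case $D(h) \le P(\sup_t X(t) > r) + P(\inf_t X(t) < -r)$, and $D_n(h)$ is dominated by the corresponding sum of empirical frequencies; both are measurable since $\sup_t = \sup_{t\in T_0}$ by (\ref{Tcond})--(\ref{Tcond2}). Because $\|X\|_\infty < \infty$ a.s.\ (the process $X = Y + Z$ inherits sup-norm bounded paths from $Y\in M(T)$), the tail probabilities tend to $0$ as $r\to\infty$, and the classical SLLN forces the empirical frequencies to converge to them. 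Given $\ve > 0$, one first chooses $r$ so large that both $\sup_{h\in E\setminus E_r} D(h)$ and $\limsup_n \sup_{h\in E\setminus E_r} D_n(h)$ lie below $\ve$, almost surely.

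For the compact piece $E_r$ the smoothing hypothesis is crucial. Since $f_Z$ is absolutely continuous with $f_Z'\in L_1(\R)$ and integrates to one, $f_Z$ is bounded by $K:=\|f_Z'\|_1$, so $F_Z$ is $K$-Lipschitz. Conditioning on $Y$ (and using (\ref{Tcond3}) to make the supremum countable),
\begin{align*}
P(X\succeq h) = E\Bigl[1 - F_Z\bigl(\sup_{t\in T_0}(h(t)-Y(t))\bigr)\Bigr],
\end{align*}
and because $h\mapsto \sup_t(h-Y)$ is $1$-Lipschitz in sup-norm, $h\mapsto P(X\succeq h)$ is $K$-Lipschitz in sup-norm; similarly for $\preceq$. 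Given $\ve>0$, pick $\delta$ with $2K\delta<\ve$ and, using sup-norm compactness of $E_r$, a finite $\delta$-net $h_1,\ldots,h_N$. For $h\in E_r$ within $\delta$ of $h_i$,
\begin{align*}
I(X\succeq h_i+\delta)\le I(X\succeq h)\le I(X\succeq h_i-\delta),
\end{align*}
and the Lipschitz estimate shows the resulting bracket has $L_1(P)$-width at most $2K\delta < \ve$. Hence the class $\{I(\cdot\succeq h): h\in E_r\}$ satisfies the Blum--DeHardt hypothesis, yielding the desired uniform convergence on $E_r$; the same works for $\preceq$, and combining with the tail bound proves (\ref{LLN}).

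The main obstacle---and what forces the smoothing assumption into the theorem---is producing the sup-norm Lipschitz bound on $h\mapsto P(X\succeq h)$: without $f_Z$ bounded these probabilities are in general only upper semi-continuous in $h$ and one cannot make bracket widths small uniformly. Outer-measure measurability poses no additional difficulty, because the bracketing sandwich bounds $\sup_{h\in E_r}(\cdot)^*$ by a max over finitely many measurable quantities indexed by the net.
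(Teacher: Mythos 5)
Your proof is correct and follows essentially the same route as the paper: the same tail truncation over $\{h:\|h\|_{\infty}\ge r\}$ handled by the SLLN, reduction via (\ref{min-min}), and Blum--DeHardt $L_1$-bracketing on the compact sets $E_r$ using the brackets $I(X\succeq h_i+\delta)\le I(X\succeq h)\le I(X\succeq h_i-\delta)$. The only (harmless) difference is how the bracket-width bound is obtained: you deduce $\|f_Z\|_{\infty}\le\|f_Z'\|_{1}$, hence a Lipschitz $F_Z$, and condition on $Y$, whereas the paper gets the equivalent sup-norm continuity of $h\mapsto P(X\succeq h)$ from the $L_1$-translation estimate of Lemma~\ref{den} via Lemma~\ref{den 2}; both are valid under the stated hypotheses.
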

\bigskip

In order to prove this result we first establish some lemmas which will also be useful in our refinements  of (\ref{LLN}) that follow below. For $h \in M(T)$ we define
the stochastic process
$\{W_h: h \in M(T)\}$ on $(\Omega,\mathcal{F},P)$, where
\begin{align}\label{Wdef}
W_h \equiv W(h)= \inf_{t \in T} (X(t)-h(t)), h \in M(T). 
\end{align}
\bigskip

\begin{lem}\label{den}
Let $f$ be a probability density on $\mathbb{R}$ which is absolutely continuous and such that its derivative $f^{'}$ is in $L^{1}(\mathbb{R})$. 
Then,
\begin{align}\label{den 1}
\int_{\mathbb{R}} |f(x+\delta) -f(x)|dx \le |\delta| \int_{\mathbb{R}} |f^{'}(x)|dx. 
\end{align}
\end{lem}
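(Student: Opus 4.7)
The plan is to reduce the inequality to the fundamental theorem of calculus for absolutely continuous functions, followed by an application of Fubini/Tonelli. Since $f$ is absolutely continuous on $\mathbb{R}$ and $f' \in L^1(\mathbb{R})$, for any $x \in \mathbb{R}$ and $\delta \in \mathbb{R}$ we have the representation
\begin{align*}
f(x+\delta)-f(x) = \int_{x}^{x+\delta} f'(u)\,du = \int_{0}^{\delta} f'(x+s)\,ds,
\end{align*}
where the second equality is just a change of variable (and the integral with $\delta<0$ is interpreted in the usual signed sense).

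Taking absolute values inside the integral, I would write, for $\delta \ge 0$,
\begin{align*}
|f(x+\delta)-f(x)| \le \int_{0}^{\delta}|f'(x+s)|\,ds,
\end{align*}
and an analogous bound with $\int_{\delta}^{0}$ when $\delta<0$. In both cases the right-hand side equals $\int_{0}^{|\delta|}|f'(x+\sigma s)|\,ds$ for an appropriate sign $\sigma\in\{-1,+1\}$.

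Next I would integrate in $x$ over $\mathbb{R}$ and apply Tonelli's theorem (which is legitimate since the integrand is non-negative and measurable):
\begin{align*}
\int_{\mathbb{R}}|f(x+\delta)-f(x)|\,dx &\le \int_{\mathbb{R}}\int_{0}^{|\delta|}|f'(x+\sigma s)|\,ds\,dx \\
&= \int_{0}^{|\delta|}\int_{\mathbb{R}}|f'(x+\sigma s)|\,dx\,ds.
\end{align*}
Translation invariance of Lebesgue measure then gives $\int_{\mathbb{R}}|f'(x+\sigma s)|\,dx = \int_{\mathbb{R}}|f'(x)|\,dx$ for every $s$, so the right-hand side becomes $|\delta|\int_{\mathbb{R}}|f'(x)|\,dx$, which is the desired bound.

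There is no real obstacle here; the only small care needed is to use the correct signed version of the fundamental theorem when $\delta<0$, and to invoke Tonelli rather than Fubini so that one need not check integrability before swapping the order (non-negativity of the integrand suffices). The assumption $f'\in L^{1}(\mathbb{R})$ is what makes the final bound finite and meaningful.
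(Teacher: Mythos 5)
Your proof is correct and follows essentially the same route as the paper: represent $f(x+\delta)-f(x)$ as the integral of $f'$ via the fundamental theorem for absolutely continuous functions, then swap the order of integration (the paper uses Fubini with an indicator function, you use Tonelli after a change of variables — an immaterial difference). The handling of the case $\delta<0$ via the signed convention matches the paper's "follows similarly" remark.
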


\begin{proof} If $\delta \ge 0$, then
\begin{align} \int_{\mathbb{R}}&|f(x+\delta)-f(x)|\, dx=\int_{\mathbb{R}}|\int_{x}^{x+\delta}f^{'}(u)\, du|\, dx\notag\\
&\le \int_{\mathbb{R}}\int_{x}^{x+\delta}|f^{'}(u)|\, du \, dx=\int_{\mathbb{R}}\int_{\mathbb{R}}|f^{'}(u)|I_{x\le u\le x+\delta}\, du\, dx\notag\\
&=(\text{by Fubini}) \int_{\mathbb{R}}|f^{'}(u)|\int_{\mathbb{R}}I_{x\le u\le x+\delta}\, dx\, du=\delta\int_{\mathbb{R}}|f^{'}(u)|\,du,\notag
\end{align}
which gives (\ref{den 1})). The case $\delta<0$ follows similarly.
\end{proof}
\bigskip

\begin{lem}\label{den 2}
Let $X$ be as in (\ref{X Z smooth}) with $Y$ and $Z$ satisfying the assumptions of Theorem \ref{consist 1}, and assume $W_h$ be as in (\ref{Wdef}). Then,
for $ h_1, h_2 \in M(T)$ we have
\begin{align}\label{W 1}
|W_{h_1} - W_{h_2}|\le ||h_1 - h_2||_{\infty}.
\end{align}
Hence, if $||h_1- h_2||_{\infty} \le \delta$, then
\begin{align}\label{W 2}
|P(W_{h_{1}} \ge x) - P(W_{h_{2}} \ge x)| \le P(x- \delta \le W_{h_{1}}\le x) + P(x-\delta \le W_{h_{2}}\le x), 
\end{align}
and we also have
\begin{align}\label{W 3}
|P(W_{h_{1}} \ge x) - P(W_{h_{2}} \ge x)| \le 2 \delta \int_{\mathbb{R}} |f^{'}_Z(x)|dx. 
\end{align}
\end{lem}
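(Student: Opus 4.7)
The plan is to prove the three displays in order: (\ref{W 1}) is a deterministic Lipschitz bound; (\ref{W 2}) is a purely set-theoretic consequence of (\ref{W 1}); and (\ref{W 3}) follows by substituting a uniform density bound for $W_h$ into (\ref{W 2}).

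For (\ref{W 1}), I would observe that for each $t \in T$,
\[
X(t) - h_1(t) \le X(t) - h_2(t) + \|h_1 - h_2\|_\infty,
\]
so taking $\inf_{t \in T}$ gives $W_{h_1} \le W_{h_2} + \|h_1 - h_2\|_\infty$, and switching $h_1$ and $h_2$ yields the reverse inequality. (By (\ref{Tcond}) this infimum reduces to one over the countable set $T_0$, so $W_h$ is measurable.)

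For (\ref{W 2}), assume $\|h_1 - h_2\|_\infty \le \delta$, so by (\ref{W 1}), $|W_{h_1} - W_{h_2}| \le \delta$ pointwise. On the event $\{W_{h_1} \ge x\} \setminus \{W_{h_2} \ge x\}$ one has $W_{h_2} < x$ and $W_{h_2} \ge W_{h_1} - \delta \ge x - \delta$, placing this event inside $\{x - \delta \le W_{h_2} \le x\}$. Swapping $h_1$ and $h_2$ yields the symmetric inclusion, and the elementary inequality $|P(A) - P(B)| \le P(A \setminus B) + P(B \setminus A)$ gives (\ref{W 2}).

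For (\ref{W 3}), by (\ref{W 2}) it suffices to show $P(x - \delta \le W_h \le x) \le \delta \int_{\mathbb{R}} |f'_Z(u)|\, du$ for every $h \in M(T)$. I would write $W_h = Z + V_h$, where $V_h := \inf_{t \in T}(Y(t) - h(t))$ is measurable and independent of $Z$, so $W_h$ has density $g_h(w) = E[f_Z(w - V_h)]$ and $\|g_h\|_\infty \le \|f_Z\|_\infty$. Since $f_Z$ is absolutely continuous with both $f_Z$ and $f'_Z$ in $L^1(\mathbb{R})$, the limits $f_Z(\pm \infty)$ exist and must vanish by integrability of $f_Z$, so $f_Z(w) = \int_{-\infty}^w f'_Z(u)\, du$ and thus $\|f_Z\|_\infty \le \int_{\mathbb{R}} |f'_Z(u)|\, du$. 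Consequently
\[
P(x - \delta \le W_h \le x) = \int_{x-\delta}^x g_h(w)\, dw \le \delta \int_{\mathbb{R}} |f'_Z(u)|\, du,
\]
and plugging this into (\ref{W 2}) yields (\ref{W 3}). The only substantive step is this density bound; the main care needed there is to justify the convolution representation of $g_h$ and the sup bound on $f_Z$, both of which rely crucially on the $L^1$ hypothesis on $f'_Z$.
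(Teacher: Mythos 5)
Your proof is correct, and the first two displays are handled exactly as in the paper: the pointwise Lipschitz bound (\ref{W 1}) by adding and subtracting $h_2$ before taking infima (with measurability via $T_0$), and (\ref{W 2}) by the sandwich/set-inclusion argument. Where you diverge is the final step. The paper never constructs the density of $W_h$; instead it writes $F(h,x)=P(W_h\ge x)$ as a convolution-type integral against $f_Z$, changes variables so the difference $F(h,x_1)-F(h,x_2)$ is controlled by $\int_{\mathbb{R}}|f_Z(u)-f_Z(u+(x_2-x_1))|\,du$, and invokes its Lemma \ref{den} (the $L^1$-translation bound $\int|f(x+\delta)-f(x)|\,dx\le|\delta|\int|f'|$) to get the uniform Lipschitz constant $\int_{\mathbb{R}}|f_Z'(x)|\,dx$ in $x$, which combined with (\ref{W 2}) gives (\ref{W 3}). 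You instead exhibit the density $g_h(w)=E[f_Z(w-V_h)]$ of $W_h=Z+V_h$, bound it by $\|f_Z\|_\infty$, and use the elementary inequality $\|f_Z\|_\infty\le\int_{\mathbb{R}}|f_Z'(u)|\,du$ (justified, as you note, by $f_Z\in L^1$ forcing the limits at $\pm\infty$ to vanish) to bound $P(x-\delta\le W_h\le x)\le\delta\int|f_Z'|$. Both routes rest on the same hypothesis $f_Z'\in L^1$ and yield the identical constant $2\delta\int|f_Z'|$; yours bypasses Lemma \ref{den} entirely at the cost of verifying the convolution representation of $g_h$ (Fubini plus independence of $Z$ and $V_h$, with $V_h$ finite and measurable by (\ref{Tcond})--(\ref{Tcond2})), while the paper's argument stays at the level of distribution functions and reuses its Lemma \ref{den}, which it wants anyway for the bracketing estimates later. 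Both are complete proofs of the stated lemma.
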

\bigskip

\begin{proof} 
First observe that
\begin{align}\notag \inf_{t\in T}&\bigl(X(t) - h_{1}(t)\bigr)  \le  X(s) - h_{1}(s) 
= X(s) -h_{2}(s) + h_{2}(s) - h_{1}(s)\\
&\le X(s) - h_{2}(s) + \|h_{2}-h_{1}\|_{\infty} \text{ for all } s\in T.\notag
\end{align}
Hence,  
\begin{align}\notag W_{h_{1}}&=\inf_{t\in T}\bigl(X(t) - h_{1}(t)\bigr)  \le \inf_{s\in T}\bigl(X(s) - h_{2}(s)\bigr)+\|h_{2} - h_{1}\|_{\infty}\\
&=W_{h_{2}} +\|h_{2}-h_{1}\|_{\infty}.\notag
\end{align}
and interchanging $h_1$ and $h_2$ we have (\ref{W 1}).

Hence, if $||h_1 -h_2||_{\infty}\le \delta$, we then have from (\ref{W 1}) that
\begin{align}\label{W 4}
P(W_{h_1} \ge x) \le P(W_{h_2} \ge x) + P(x-\delta \le W_{h_2} \le x) 
\end{align}
and
\begin{align}\label{W 5} 
P(W_{h_2} \ge x) \le P(W_{h_1} \ge x) + P(x-\delta \le W_{h_1} \le x), 
\end{align}
and (\ref{W 4}) and (\ref{W 5}) combine to give (\ref{W 2}). 
\bigskip

To verify (\ref{W 3}) we define for $ h \in M(T)$ 
\begin{align}\label{W 6}
F(h,x)= P(W_{h}\ge x). 
\end{align}
>From (\ref{X Z smooth}), $F(h,x)= P(\inf_{t \in T}(Y(t)-h(t)) +Z \ge x)$,  and hence the independence of $Y$ and $Z$ implies
\begin{align}\label{W 7}
F(h,x)= \int_{\mathbb{R}}P_Y(\inf_{t \in T}(Y(t) -h(t)) \ge x-y)f_Z(y)dy. 
\end{align}
Letting $\xi(h)= \inf_{t \in T}(Y(t) -h(t))$, we see (\ref{W 7}) implies
\begin{align}\label{W 8}
F(h,x_1)- F(h,x_2)= \int_{\mathbb{R}}[P_Y(\xi(h) \ge x_1-y)- P_Y(\xi(h) \ge x_2-y)]f_Z(y)dy 
\end{align}
$$
=\int_{\mathbb{R}}[P_Y(\xi(h) \ge s)[f_Z(x_1-s) -f_Z(x_2-s)]ds.
$$
Therefore,
\begin{align}\label{W 9}
|F(h,x_1)- F(h,x_2)| \le\int_{\mathbb{R}}|f_Z(x_1-s) -f_Z(x_2-s)|ds,  
\end{align}
and setting $u=x_1-s$ we have $x_2-s=(x_2-x_1) +u$, so  Lemma \ref{den} implies
$$
|F(h,x_1)- F(h,x_2)| \le\int_{\mathbb{R}}|f_Z(u) -f_Z(u+(x_2-x_1))|du \le |x_1 -x_2|\int_{\mathbb{R}}|f_Z^{'}(x)|dx.  
$$
Thus the lemma is proven since (\ref{W 2}) and the above combine to give (\ref{W 3}) when $||h_1- h_2||_{\infty} \le \delta$.
\end{proof}
\bigskip

\begin{proof}
In order to verify (\ref{LLN}) we first will show 
for every $\epsilon>0$ there is an $r_0< \infty$ such that 
the strong law of large numbers and  
implies with probability one that
\begin{align}\label{W 14}
\limsup_{n \rightarrow \infty}\sup_{\{h: ||h||_{\infty} \ge r_0\}} D_n(h)\le P(||X||_{\infty} \ge r_0) \le \epsilon, 
\end{align}
and
\begin{align}\label{W 12}
\lim_{r \rightarrow \infty}\sup_{\{h: ||h||_{\infty} \ge r\}} D(h)=0. 
\end{align}
The argument  for (\ref{W 14}) and (\ref{W 12}) is essentially the proof of Proposition 5 in \cite{lp-r-half}, but the details are included below.

To prove (\ref{W 12}) we observe
$$
\sup_{||h||_{\infty} \ge r} D(h) \le A_r + B_r,
$$
where
$$
A_r= \sup_{||h||_{\infty} \ge r, ||h||_{\infty} =\sup_{t \in T}h(t) }  P( X \succeq h),
$$
and
$$
B_r= \sup_{||h||_{\infty} \ge r, ||h||_{\infty} =\sup_{t \in T}(-h(t)) }  P( X \preceq h).
$$
Thus  
$$
A_r \le  \sup_{||h||_{\infty}  =\sup_{t \in T}h(t) \ge r }  P( \sup_{t \in T} X(t) \ge \sup_{t \in T}h(t))~~~~~~~~~~~~~~~~~~~~~~~~~~
$$
$$
\le \sup_{||h||_{\infty} =\sup_{t \in T}h(t)\ge r }  P( ||X||_{\infty} \ge ||h||_{\infty}) \le P(||X||_{\infty} \ge r),
$$
and 
$$
B_r \le  \sup_{||h||_{\infty}  =\sup_{t \in T}(-h(t)) \ge r }  P( \inf_{t \in T} X(t) \le \inf_{t \in T}h(t))~~~~~~~~~~~~~~~~~~~~~~~~~~
$$
$$
\le \sup_{||h||_{\infty} =\sup_{t \in T}(-h(t))\ge r }  P( ||X||_{\infty} \ge ||h||_{\infty}) \le P(||X||_{\infty} \ge r),
$$
and hence we have  (\ref{W 12}). To prove (\ref{W 14}) we note that
$$
\sup_{||h||_{\infty} \ge r} D_n(h) \le A_{r,n} + B_{r,n}.
$$
where
$$
A_{r,n}= \sup_{||h||_{\infty} \ge r, ||h||_{\infty} =\sup_{t \in T}h(t) }  \frac{1}{n}\sum_{j=1}^n I( X_j \succeq h), 
$$
and
$$
B_{r,n}= \sup_{||h||_{\infty} \ge r, ||h||_{\infty} =\sup_{t \in T}(-h(t)) } \frac{1}{n}\sum_{j=1}^n I( X_j \preceq h).
$$
Thus, in similar fashion it follows that  
$$
A_{r,n} \le  \sup_{||h||_{\infty}  =\sup_{t \in T}h(t) \ge r }  \frac{1}{n}\sum_{j=1}^n I(||X_j||_{\infty} \ge ||h||_{\infty})\le \frac{1}{n}\sum_{j=1}^nI(||X_j||_{\infty} \ge r),
$$
and 
$$
B_{r,n} \le  \sup_{||h||_{\infty}  =\sup_{t \in T}(-h(t)) \ge r } \frac{1}{n}\sum_{j=1}^n I(||X_j||_{\infty} \ge ||h||_{\infty})\le \frac{1}{n}\sum_{j=1}^nI(||X_j||_{\infty} \ge r),
$$
and therefore we have  (\ref{W 14}).

Since $\epsilon>0$ is arbitrary, (\ref{W 12}) and (\ref{W 14}) combine to imply (\ref{LLN}) provided we show for every $r>0$ that with probability one
\begin{align}\label{W 15}
 \lim_{n \rightarrow \infty} \sup_{h \in E_{r}}|D_n(h)-D(h)|^{*}=0, 
\end{align}
where $E_r$ is defined as in (\ref{compE}). The proof of (\ref{W 15}) follows from the Blum-Dehardt Theorem using the  bracketing entropy for $E_r$ as  in \cite{Dudley-unif-clt}, p. 235. That is, since $E_r$ is compact in $M(T)$ with respect to the sup-norm, for every $\delta>0$ implies there exists finitely many  points $\{h_1,\cdots,h_{k(\delta)}\} \subseteq E_r$ such that
$$
\sup_{h \in E_{r}} \inf_{h_{j}}||h-h_j||_{\infty}\le \delta.
$$
In addition, the brackets $F(\delta,h_j)=\{ z \in M(T): h_j(t)-\delta \le z(t)\le h_j(t) + \delta\}$ have union covering $E_r$ with $ z \in F(\delta,h_j)$ implying
$$
I(X \succeq h_j +\delta) \le I(X\succeq  z) \le I(X\succeq h_j -\delta).
$$
Hence, for $\epsilon>0$ fixed, and $\delta= \delta(\epsilon)>0$ such that $4\delta \int_{\mathbb{R}}|f^{'}_Z(x)|dx \le \epsilon$, we have from (\ref{W 3}) that
$$
\mathcal{E}_r \equiv \{I( X\succeq h): h \in E_r\}
$$
is a subset of
$$
 \cup_{j=1}^{k(\delta(\epsilon))} \{I(X \succeq z): z \in E_r, I(X\succeq h_j+\delta) \le I(X \succeq z) \le I(X \succeq h_j-\delta)  \},
$$
and
$$
||I(X \succeq h_j -\delta)- I(X \succeq h_j+\delta) ||_{1}\le  \epsilon,
$$
where $||\cdot||_1$ denotes the $L_1$ norm with respect to $P$. Hence, for every $\epsilon>0$ we have $\mathcal{E}_r$ covered by finitely many  $L_1$-brackets of diameter $\epsilon$.  A similar argument can be made for 
$$
\mathcal{F}_r \equiv \{I( X \preceq h): h \in E_r\},
$$
and hence (\ref{W 15}) holds by (\ref{min-min})  and the Blum-Dehardt result mentioned above. 
Combining (\ref{W 12}), (\ref{W 14}), and (\ref{W 15}) we have (\ref{LLN}).
\end{proof}
\bigskip

\subsection{Some Remarks on the C1 Condition in \cite{lp-r-half}} 

Let $X= \{X_t: t \in [0,1]\}$ be a sample continuous stochastic process, and assume $P$ is the Borel probability on $C[0,1]$ induced by $X$.  The main focus of the paper \cite{lp-r-half} is the formulation of a consistency result for half-region depth that is uniform over an equicontinuous family of functions on $[0,1]$, where the depth is with respect to $X$, or equivalently the probability distribution $P$. One of the crucial assumptions in this endeavor is that $P$ satisfy their $C1$ condition, where 
\bigskip

\noindent C1: Given $\epsilon>0$, there exists a $\delta>0$, such that for every pair of functions  $h_1,h_2 \in C[0,1]$ with $||h_1 - h_2||_{\infty} \le \delta$ implies
\begin{align}\label{C 1}
P(h_1 \preceq_{[0,1]} X  \preceq_{[0,1]} h_2) <\epsilon. 
\end{align}
\bigskip

This condition appears on the bottom of page 1687 in \cite{lp-r-half}. The notation in  \cite{lp-r-half} is slightly different than that above, but (\ref{C 1}) is consistent with their use of C1 on page 1688 of \cite{lp-r-half}.
However, the main problem with (\ref{C 1}) as used in \cite{lp-r-half} is two-fold. First, in their proof of Theorem 3 of \cite{lp-r-half} it is applied to functions $h_1,h_2$ which are not continuous, and secondly it is claimed that for $h_1,h_2 \in C[0,1]$ with $h_1 \preceq h_2$
\begin{align}\label{C 1-2}
P(h_1\preceq X)-P(h_2 \preceq X)=P(h_1 \preceq X \preceq h_2),
\end{align}
which is far from being true since
\begin{align}
P(h_1\preceq X)-P(h_2 \preceq X)=P(\{h_1 \preceq X\}\cap\{h_2 \preceq X\}^{c}).
\end{align}

Hence there are some major concerns with their proof, and  in Theorem \ref{consist 1} above we obtained a result that alleviates such concerns. Moreover, we have taken care to discuss when half-region is non-trivial, and how to eliminate the problem of it being trivial by using smoothing. Another question one might ask is whether the quantity
\begin{align}\label{C 1-3}
|P(h_1\preceq X)-P(h_2 \preceq X)|,
\end{align}
can be made arbitrarily small when $h_1,h_2 \in C[0,1]$ provided $||h_1-h_2||_{\infty}$ is sufficiently small. This is an important ingredient in our proof, and we established sufficient conditions for the continuity posed for (\ref{C 1-3}) in Lemma (\ref{den 2}), but it is easy to see it may fail in some cases. We conclude this section with two such examples, and for ease in writing we will refer to the continuity posed for (\ref{C 1-3}) as condition C2.

The first example where condition C2 fails is for
$$ 
X_t = \max \{0,B_t\}, t \in [0,1], 
$$
where $\{B(t): t \in [0,1]\}$ is a sample continuous Brownian motion such that $P(B(0)=0)=1.$
Thus for $h_1(t)=0, t \in [0,1],$ and $h_2(t)= \delta>0, t \in [0,1],$ we have
$$
P(h_1\preceq X)-P(h_2 \preceq X)=1,
$$
no matter how small the constant $\delta$. If we let
$$
X_t = Z + \max\{0,B_t\}, 
$$
where $Z$ is independent of the Brownian motion $B$, then  for $h_1(t)=c, t \in [0,1],$ and $h_2(t)= c+\delta>0, t \in [0,1],$  an easy calculation implies
$$
P(h_1\preceq X)-P(h_2 \preceq X)= P(c\le Z <c+\delta).
$$
Hence, if $P(Z=c)>0$, then again no matter how small the constant $\delta$, the condition C2 fails. If $Z$ has a continuous distribution, then C2 holds for all choices of $h_1,h_2 \in C[0,1]$ constant functions, but that it actually does or does not satisfy C2 is not at all obvious.

The second example fails C2 for the same reasons as those in the previous one, i.e. wherever the process starts, it never goes below that level, and if it starts at a fixed point which has positive probability, then C2 fails. However, it differs in that its paths are Lip-1 with probability one. The example is
$$ 
X_t = \int_0^t N(s)ds, t \in [0,1], 
$$
where $N=\{N(t): t \in [0,1]\}$ is a Poisson process with parameter one and cadlag paths. Properties such as those mentioned for example one also hold here. The details are left to the reader.

\section{Additional Asymptotics for Half-Region Depth}

Our next result shows the consistency result of (\ref{LLN}) can be refined to include rates of convergence provided we restrict the set $E$ to be a sup-norm compact subset of $M(T)$ satisfying
the entropy condition 
\begin{align}\label{ent}
\int_{0^+} (\log N(E, \epsilon, ||\cdot||_{\infty}))^{\frac{1}{2}}\epsilon^{-\frac{1}{2}} d\epsilon < \infty, 
\end{align}
where $N(E, \epsilon, ||\cdot||_{\infty}))$ is the covering number of $E$ with $\epsilon$-balls in the $||\cdot||_{\infty}$-norm. 
In particular, since the processes
\begin{align}\label{CLT}
\{\sqrt{n} (D_n(h) - D(h)): h \in E\}, n \ge 1, 
\end{align}
live in $\ell_{\infty}(E)$, we examine their asymptotic behavior in that setting, and in Corollary \ref{rates} produce sub-Gaussian tail bounds that are uniform in $n$.The basic notation is as in section 3, and we freely use the empirical process ideas for weak convergence in the space $\ell_{\infty}(E)$ as presented in \cite{Dudley-unif-clt} and \cite{vw}.
\bigskip

In the proof of these results we have need for the stochastic processes $\{H_{n,1,h}: h \in E\}, n \ge 1,$ and $\{H_{n,2,h}: h \in E\}, n \ge 1,$ where
\begin{align}\label{CLT2}
H_{n,1,h} \equiv \frac{1}{\sqrt n} \sum_{j=1}^n[ I( X_j \succeq h)- P( X_j \succeq h)], 
\end{align}
and
\begin{align}\label{CLT3}
H_{n,2,h} \equiv \frac{1}{\sqrt n} \sum_{j=1}^n [I( X_j \preceq h)\}-P( X_j \preceq h)].
\end{align}
The first step of our proof will be to show that each of these processes satisfies the CLT in $\ell_{\infty}(E)$
with limits that are centered, sample path bounded, Gaussian processes $G_1=\{G_{1,h}: h \in E\}$ and
$G_2=\{G_{2,h}: h \in E\}$, respectively, that are uniformly continuous on $E$ with respect to their $L_2$-distances, and have covariance functions
\begin{align}\label{CLT4}
E(G_{1,h_1}G_{1,h_2}) = P(X\succeq h_1, X \succeq h_2)- P(X\succeq h_1)P( X \succeq h_2), h_1,h_2 \in E, 
\end{align}
and 
\begin{align}\label{CLT5}
E(G_{2,h_1}G_{2,h_2}) = P(X\preceq h_1, X \preceq h_2)- P(X\preceq h_1)P( X \preceq h_2), h_1,h_2 \in E.
\end{align}
In the following theorem these Gaussian processes also appear in connection with the limiting finite dimensional distributions of the centered empirical half-region depth processes given in (\ref{CLT}), see (\ref{CLT7}-\ref{CLT9}).

\begin{thm}\label{bdd prob clt}
Let $X(t) = Y(t) + Z, t \in T,$ where $Y= \{Y(t): t \in T\}$ has sample paths in the linear space $M(T)$ 
and $Z$ is independent of $Y$ with density $f_Z(\cdot)$  on $\mathbb{R}$ that is absolutely continuous and its derivative $f_Z^{'}(\cdot)$ is  in $L_1(\mathbb{R})$. Also, assume $X_1,X_2, \cdots$ are i.i.d. copies of the process $X$ with sample paths in $M(T)$ and that $X,X_1,X_2,\cdots$ are defined on the probability space  $(\Omega,\mathcal{F},P)$.
If $E$ is a sup-norm compact subset of $M(T)$ satisfying the entropy condition (\ref{ent}), then
\begin{align}\label{CLT6}
\lim_{r \rightarrow \infty} \sup_{n \ge 1}P^{*}( \sup_{h \in E}\sqrt{n} |D_n(h) - D(h)|  \ge r) =0,  
\end{align} 
where $P^{*}$ denotes the outer probability for subsets of $(\Omega,\mathcal{F},P)$. 
Furthermore, there is a stochastic process $\{\Gamma_h: h \in E \}$ 
such that the finite dimensional distributions of the processes $\{\sqrt n (D_n(h)-D(h)): h \in E\}, n\ge 1$ converge weakly to  $\{\Gamma_h: h \in E\} $, where
\begin{align}\label{CLT7}
\mathcal{L}(\Gamma_h) =\mathcal{L}(G_{1,h}){\rm ~{for}~}  h \in E {\rm ~{and}~} P(X\succeq h)< P(X \preceq h),
\end{align} 
\begin{align}\label{CLT8}
 \mathcal{L}(\Gamma_h) =\mathcal{L}(G_{2,h}){\rm~{for}~} h \in E {\rm~ {and}~}  P(X\preceq h)< P(X \succeq h), 
\end{align} 
and
\begin{align}\label{CLT9}
\mathcal{L}(\Gamma_h) =\mathcal{L}(\min\{G_{1,h},G_{2,h}\}) {\rm~{for}~} h \in E {\rm~ {and}~} P(X\succeq h)=P(X \preceq h).
\end{align} 
\end{thm}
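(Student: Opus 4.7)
The plan is to obtain the $\sqrt{n}$ asymptotics by first establishing that each of the empirical processes $\{H_{n,1,h}\}_{h\in E}$ and $\{H_{n,2,h}\}_{h\in E}$ from (\ref{CLT2})--(\ref{CLT3}) satisfies the CLT in $\ell_{\infty}(E)$ with Gaussian limits $G_{1},G_{2}$ having covariances (\ref{CLT4})--(\ref{CLT5}), and then reading off both (\ref{CLT6}) and the finite-dimensional description (\ref{CLT7})--(\ref{CLT9}) by applying the elementary inequality (\ref{min-min}) and the continuous mapping theorem to the $\min$ function.

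The main technical step is to upgrade the sup-norm compactness and the entropy condition (\ref{ent}) on $E$ to an $L_{2}(P)$-bracketing entropy bound for $\mathcal{F}_{1}=\{I(X\succeq h):h\in E\}$ and $\mathcal{F}_{2}=\{I(X\preceq h):h\in E\}$. The bracket construction from the proof of Theorem \ref{consist 1} still serves: a sup-norm $\delta$-cover $\{h_{1},\dots,h_{k(\delta)}\}$ of $E$ yields brackets $[I(X\succeq h_{j}+\delta),I(X\succeq h_{j}-\delta)]$ for $\mathcal{F}_{1}$, and Lemma \ref{den 2} evaluated at $x=0$ bounds the $L_{1}(P)$-length of each bracket by $4\delta\int|f_{Z}'|$. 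Since the brackets are differences of indicators, their $L_{2}(P)$-length is the square root of their $L_{1}(P)$-length, so
\[
N_{[\,]}(\epsilon,\mathcal{F}_{1},L_{2}(P))\;\le\;N(E,c\epsilon^{2},\|\cdot\|_{\infty})
\]
for a constant $c$ depending only on $\int|f_{Z}'|$. A change of variables $u=c\epsilon^{2}$ converts the bracketing entropy integral for $\mathcal{F}_{1}$ into a constant multiple of the integral in (\ref{ent}), so Ossiander's bracketing CLT condition holds and the bracketing CLT in \cite{vw} gives $H_{n,1}\Rightarrow G_{1}$ in $\ell_{\infty}(E)$. The argument for $\mathcal{F}_{2}$ and $H_{n,2}$ is identical using $\tilde{W}_{h}=\inf_{t}(h(t)-X(t))$ in place of $W_{h}$, and the joint convergence $(H_{n,1},H_{n,2})\Rightarrow(G_{1},G_{2})$ in $\ell_{\infty}(E)\times\ell_{\infty}(E)$ follows from the same bracketing argument applied to the combined class.

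Inequality (\ref{min-min}) yields $\sqrt{n}|D_{n}(h)-D(h)|\le|H_{n,1,h}|+|H_{n,2,h}|$, so taking suprema over $h\in E$ and using that each $H_{n,i}$ is asymptotically tight in $\ell_{\infty}(E)$ delivers (\ref{CLT6}) immediately. For the finite-dimensional convergence, fix $h\in E$ and set $a_{n,h}=\frac{1}{n}\sum_{j=1}^{n}I(X_{j}\succeq h)$, $b_{n,h}=\frac{1}{n}\sum_{j=1}^{n}I(X_{j}\preceq h)$, $a_{h}=P(X\succeq h)$, $b_{h}=P(X\preceq h)$. If $a_{h}<b_{h}$, the strong law guarantees $a_{n,h}<b_{n,h}$ eventually, whence $\sqrt{n}(D_{n}(h)-D(h))=H_{n,1,h}$ for all $n$ large enough and (\ref{CLT7}) follows; (\ref{CLT8}) is symmetric. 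When $a_{h}=b_{h}$ the identity $\sqrt{n}(D_{n}(h)-D(h))=\min\{H_{n,1,h},H_{n,2,h}\}$ holds without any approximation, and the continuous mapping theorem combined with the joint weak convergence of $(H_{n,1,h},H_{n,2,h})$ to $(G_{1,h},G_{2,h})$ gives (\ref{CLT9}). The same coordinatewise case analysis paired with joint finite-dimensional convergence of $(H_{n,1},H_{n,2})$ handles general finite tuples and produces the process $\{\Gamma_{h}:h\in E\}$ via these joint distributions.

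The principal obstacle is the entropy translation in the second paragraph: bridging the sup-norm geometry of $E$ to the $L_{2}(P)$-bracketing geometry of the two indicator classes is the only place where the smoothing hypothesis on $Z$ is genuinely used, and everything downstream is a routine application of the bracketing CLT machinery followed by a case analysis around the $\min$ function.
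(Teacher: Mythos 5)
Your proposal is correct and follows essentially the same route as the paper: the identical bracketing construction via Lemma \ref{den 2} and the $L_2$--$L_1$ relation for indicator brackets to verify Ossiander's condition from (\ref{ent}), the bound (\ref{min-min}) plus asymptotic tightness of $H_{n,1},H_{n,2}$ for (\ref{CLT6}), and the same tie/non-tie case analysis with the strong law for the finite-dimensional limits. The only cosmetic difference is that you package the fidi step as joint convergence of $(H_{n,1},H_{n,2})$ plus the continuous mapping theorem, whereas the paper runs the equivalent argument through the Cram\'er--Wold device applied to a continuous function $\Lambda$ of a jointly Gaussian-converging vector.
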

\bigskip

\begin{proof} Since (\ref{min-min}) holds and $X,X_1,X_2,\cdots $ are i.i.d. we have 
\begin{align}\label{CLT10}
\sqrt n|D_n(h) - D(h)| \le |H_{n,1,h}| + |H_{n,2,h}|.
\end{align}
Hence (\ref{CLT6}) will hold provided we show 
\begin{align}\label{CLT11}
\lim_{r \rightarrow \infty} \sup_{n \ge 1}P^{*}( \sup_{h \in E}|H_{n,1,h}|   \ge r) =0,  
\end{align}
and 
\begin{align}\label{CLT12}
\lim_{r \rightarrow \infty} \sup_{n \ge 1}P^{*}( \sup_{h \in E}|H_{n,2,h}|   \ge r) =0.  
\end{align}

To verify (\ref{CLT11}) and (\ref{CLT12}) it suffices to show that the stochastic processes $\{H_{n,1,h}: h \in E\}$ and  $\{H_{n,2,h}: h \in E\}$ converge weakly in $\ell_{\infty}(E)$ to the centered Gaussian processes $G_1$ and $G_2$, respectively. That is, once these CLT's hold, then item (iii) of Theorem 1.3.4 of \cite{vw} provides the conclusion we need.

In order to formulate these CLTs in $\ell_{\infty}(E)$ we let $\mathcal{C}$ be a family of subsets of $M(T)$ indexed by $E$, where
\begin{align}\label{CLT13}
\mathcal{C}= \mathcal{C}_{\inf} \cup \mathcal{C}_{\sup},
\end{align}
\begin{align}\label{CLT14}
\mathcal{C}_{\inf}= \{C_h: h \in E\}~\rm {and}~\mathcal{C}_{\sup}= \{\hat{C}_h: h \in E\}, 
\end{align}
\begin{align}\label{CLT15}
C_h=\{ z \in M(T): \inf_{t \in T}(z(t) -h(t)) \ge 0\}, 
\end{align}
and 
\begin{align}\label{CLT16}
\hat{C}_h=\{ z \in M(T): \sup_{t \in T}(z(t) -h(t)) \le 0\}, 
\end{align}
Of course, since we are assuming $M(T)$ is a linear space such that (\ref{Tcond}) holds we have the inf and sup defining the sets $C_h$ and $D_h$, respectively, are the same when $t \in T$ is replaced by $t \in T_0$.

Since $\ell_{\infty}(E)$ is a separable Banach space only when $E$ is finite, we need to use weak convergence in the non-separable setting, and proceed to verify that $\mathcal{C}_{\inf}$ and 
$\mathcal{C}_{\sup}$ are both $P$-Donsker classes of sets. Then, since a finite union of $P$-Donsker classes is  $P$-Donsker, we will have $\mathcal{C}$ also $P$-Donsker.

To show $\mathcal{C}_{\inf}$ is $P$-Donsker we recall the stochastic process $\{W_h: h  \in M(T)\}$ on $(\Omega,\mathcal{F},P)$ given in (\ref{Wdef}). Then, the path $X(t,\cdot)$ is in $C_h$ if and only if $W_h(\cdot) \ge 0$, and we also have $X \succeq h$ on $T$ if and only if $W_h \ge 0$. Therefore, $\mathcal{C}_{\inf}$ $P$-Donsker will imply that the empirical processes $\{H_{n,1,h}: h \in E\}$  as given in (\ref{CLT2}) converge in distribution on $\ell_{\infty}(E)$ to a centered Gaussian measure $\gamma_{\inf}$ with separable support in $\ell_{\infty}(E)$. Furthermore, $\gamma_{\inf}$ is induced by the Gaussian process $G_1$ as indicated above.

Since (\ref{ent}) holds, for every $\delta>0$ there are $N_{\delta} \equiv N(E, \delta, ||\cdot||_{\infty})$ functions $h_1,\cdots,h_{N_{\delta}}$ in $E$ such that the brackets
\begin{align}\label{brack 1}
F(\delta,h_j)=\{ z \in M(T): h_j(t)-\delta \le z(t)\le h_j(t) + \delta\}, j=1,\cdots,N_{\delta}, 
\end{align} 
have union covering $E$. Furthermore,  $ z \in F(\delta,h_j)$ implies
$$
I(X \succeq h_j +\delta) \le I(X\succeq  z) \le I(X\succeq h_j -\delta).
$$
Hence, for $\delta>0$ fixed we have from (\ref{W 3}) that
$$
\mathcal{E} \equiv \{I( X\succeq h): h \in E\} 
$$
is a subset of
$$
\cup_{j=1}^{N_{\delta}} \{I(X \succeq z): z \in E, I(X\succeq h_j+\delta) \le I(X \succeq z) \le I(X \succeq h_j-\delta)  \}.
$$
Furthermore,
$$
||I(X \succeq h_j -\delta)- I(X \succeq h_j+\delta) ||_{2}^2 = ||I(X \succeq h_j -\delta)- I(X \succeq h_j+\delta) ||_{1} 
$$
$$
~~~~~~~~~~~~~~~~~~~~~~~~~~~~= P(X \succeq h_j-\delta) - P(X_j \succeq h_j +\delta) \le 4\delta\int_{\mathbb{R}}|f_Z^{'}(x)|dx,
$$
where $||\cdot||_p$ denotes the $L_p$  norm with respect to $P$, and the inequality follows from (\ref{W 3}) and (\ref{brack 1}).
Now
$$
\int_{0^+}(\log N(\mathcal{E}, x, ||\cdot||_2))^{\frac{1}{2}}dx= \int_{0^+}(\log N(\mathcal{E}, x^2, ||\cdot||_1))^{\frac{1}{2}}dx
$$
$$
\le \int_{0^+}(\log N(\mathcal{E}, x^2, ||\cdot||_{\infty}))^{\frac{1}{2}}dx
$$
where the inequality follows since $||\cdot||_1 \le ||\cdot||_{\infty}$ on $M(T)$. Letting $s=x^2$ in the right most integral above and applying (\ref{ent}) we have
\begin{align}\label{brack 2}
\int_{0^+}(\log N(\mathcal{E}, x, ||\cdot||_2))^{\frac{1}{2}}dx \le  \int_{0^+}(\log N(\mathcal{E}, s, ||\cdot||_{\infty}))^{\frac{1}{2}}s^{-\frac{1}{2}}ds < \infty. 
\end{align}

Hence by Ossiander's CLT with bracketing \cite{ossiander-bracketing}, or as in \cite{Dudley-unif-clt}, p 239, we have $\mathcal{E}$  a $P$-Donsker class of functions, which implies $\mathcal{C}_{\inf}$ is a $P$-Donsker class of sets. Hence the empirical processes $\{H_{n,1,h}: h \in E\}$ given in (\ref{CLT2}) converge weakly in $\ell_{\infty}(E)$ to the centered Gaussian process $G_1$ induced by the Radon Gaussian measure $\gamma_{\inf}$ and has covariance as indicated in (4.5). A similar result holds for the empirical processes $\{H_{n,2,h}: h \in E\}$ given in (\ref{CLT3}), which therefore satisfy the CLT in $\ell_{\infty}(E)$ with centered Gaussian limit $G_2$. Hence (\ref{CLT6}) is proven.
\bigskip

The next step of our proof is to show the finite dimensional distributions of the stochastic processes in (\ref{CLT})
converge. To check this we set
$$
F_n(h) = \frac{1}{ n} \sum_{j=1}^n I( X_j \preceq h),~~~ F(h)=P( X \preceq h), 
$$
and 
$$
G_n(h)= \frac{1}{n} \sum_{j=1}^n I( X_j \succeq h),~~~ G(h)= P( X \succeq h). 
$$

Hence, let $I=I_1 \cup I_2 \cup I_3$, where $I_1,I_2,I_3$ are disjoint,
$$
I_1= \{h_1,\cdots,h_{r_1}\},~ I_2=\{h_{r_1+1},\cdots,h_{r_{2}}\},~I_3=\{h_{r_2+1},\cdots,h_r\},
$$
and
$$
I_1= \{ h \in I: F(h)<G(h)\},
$$
$$
I_2= \{ h \in I: F(h)>G(h)\},
$$
and
$$
I_3= \{ h \in I: F(h)=G(h)\}.
$$
Setting
$$
V_n(h)= \sqrt n(D_n(h)-D(h)),
$$
we have
$$
V_n(h)= \sqrt n [\min(F_n(h),G_n(h)) - \min(F(h),G(h))],
$$
and since $I$ is an arbitrary subset of $E$ to prove the finite dimensional distributions of the processes in (\ref{CLT}) we need to show
$$
(V_n(h_1),\cdots,V_n(h_r)), 
$$
converges in distribution on $\mathbb{R}^r$.

For $n\ge 1$ let
$$
U_n(h)= \sqrt n (F_n(h)- F(h), h \in I_1,
$$
$$
U_n(h)=\sqrt n(G_n(h) - G(h)), h \in I_2
$$
$$
U_n(h)=\sqrt n \min (F_n(h)-F(h), G_n(h)-G(h)), h \in I_3,
$$
and take
$$
N(\omega)= \min\{m\ge 1: U_n(h_i)=V_n(h_i), 1 \le i \le r_2,~n\ge m\}.
$$
Then, the strong law of large numbers implies $P(N<\infty)=1$, and $U_n(h)=V_n(h)$ for all $h \in I$ and all $n \ge N$. Therefore,
$$
\lim_{n \rightarrow \infty}P(\sup_{h \in I}|U_n(h)-V_n(h)| \ge \epsilon) \le \lim_{n \rightarrow \infty}P(N >n)=0,
$$
and the convergence of the finite dimensional distributions will hold if we show 
$$
T_n= u_1U_n(h_1)+\cdots+u_rU_n(h_r) 
$$
converges in distribution for all $(u_1,\cdots,u_r) \in \mathbb{R}^r$. Setting
$$
S_n= \sum_{j=1}^{r_1} u_j(F_n(h_j)-F(h_j)) +  \sum_{j=r_1 +1}^{r_2} u_j(G_n(h_j)-G(h_j)),
$$
we have
$$
T_n= \sqrt n\big [\min [S_n+u_{r_{2}+1}[F_n (h_{r_{2} +1}) -F(h_{r_{2} +1})], S_n+u_{r_{2}+1}[G_n (h_{r_{2} +1}) -G(h_{r_{2} +1})] \big ] +
$$
$$
 \sqrt n \sum_{j=r_{2} +2}^r u_j \min [F_n (h_{j}) -F(h_{j}), G_n (h_{j}) -G(h_{j})]. 
$$
If
$$
\Lambda(a_1,b_1,a_2,b_2,\cdots,a_k,b_k) = \sum_{i=1}^k \min[a_i,b_i],
$$
then $\Lambda$ is continuous from $\mathbb{R}^{2k}$ to $\mathbb{R}^{k}$. Therefore, if $k=r-r_2$ with
$$
R_n= (a_{n,1},b_{n,1},\cdots,a_{n,r-r_{2}},b_{n,r-r_{2}})
$$
and
$$
a_{n,1}=\sqrt n \big(S_n+u_{r_{2}+1}[F_n (h_{r_{2} +1}) -F(h_{r_{2} +1})]\big),
$$
$$
b_{n,1}= \sqrt n \big (S_n+u_{r_{2}+1}[G_n (h_{r_{2} +1}) -G(h_{r_{2} +1})]\big), 
$$
$$
a_{n,i}=\sqrt n u_{r_{2}+i}[F_n (h_{r_{2} +i}) -F(h_{r_{2} +i})], i=2,\cdots, r-r_2,
$$
$$
b_{n,i}=\sqrt n u_{r_{2}+i}[G_n (h_{r_{2} +i}) -G(h_{r_{2} +i})], i=2,\cdots,r-r_2,
$$
we have $R_n$ converging weakly to a centered Gaussian random variable, i.e. it is a  sum of independent vectors in $\mathbb{R}^{2(r-r_{2})}$ whose summands are indicator functions multiplied by $u_j$'s. Now
$\Lambda(R_n)=T_n,$ 
and thus the continuous mapping theorem implies $T_n$ converges in distribution. Since the vector $(u_1,\cdots,u_r) \in \mathbb{R}^r$ is arbitrary, the finite dimensional distributions converge. Of course, the claims in (\ref{CLT7}), (\ref{CLT8}), and (\ref{CLT9})  involving the one dimensional distributions are also now proven.
\end{proof}

Next we turn to a corollary of Theorem \ref{bdd prob clt}, which provides sub-Gaussian tail bounds for the convergence to zero in (\ref{CLT6}). To avoid measurability issues arising in its proof, we assume the set $E$ is countable. Of course, under the assumption (\ref{Tcond}) and that $M(T)$ is a linear space, we have that the random vectors (stochastic processes) 
$$
\{D_n(h))-D(h): h \in E\} {\rm{~and~}} H_{n,i}:=\{H_{n,i,h}: h \in E\}, i =1,2,
$$
given in (\ref{CLT2}) and (\ref{CLT3}), take values in the Banach space $\ell_{\infty}(E)$ with norm $||x||_{\infty} = \sup_{h \in E}|x_h|$ for $x=\{x_h\} \in \ell_{\infty}(E)$. Hence, the assumption $E$ is countable implies these random vectors on $(\Omega, \mathcal{F},P)$ are $\ell_{\infty}(E)$ valued  in the sense used in \cite{led-tal-book}, so for the convenience of the reader we freely quote from this single source a number of results used in the proof. However, from a historical point of view it should be observed that an important first step in these results involves the Hoffmann-J\o rgenesen inequalities obtained in \cite{hoff}, and for series and a.s. normalized partial sums of sequences of independent random vectors, some results of a similar nature appeared in \cite{jain-marcus-integ-75} and \cite{kuelbs-expon-moments-78}.

\begin{notat} Let $X$ take values in a Banach space $B$ with norm $||x|| = \sup_{f \in D}|f(x)|, x  \in B,$ where $D$ is a countable subset of the unit ball of the dual space of $B$ and  $f(X)$ is measurable for each $f \in D$. Then, we are in the setting used in Chapter 6 of \cite{led-tal-book}, and the $\psi_2$-Orlicz norm of $||X||$ is given by
$$
||X||_{\psi_2} = \inf\{c>0: E( \exp\{(\frac{||X||}{c})^2\}) \le 2\}. 
$$
\end{notat}

\begin{cor}\label{rates} 
Let $H_{n,i}:=\{ H_{n,i,h}: h \in E\}, i =1,2$, be the stochastic processes in  (\ref{CLT2}) and (\ref{CLT3}), and for $i=1,2$
\begin{align}\label{CLT30}
||H_{n,i}||_{\infty} = \sup_{h \in E}|H_{n,i,h}|.
\end{align}
Then, under the assumptions of Theorem \ref{bdd prob clt} and  that $E$ is countable, we have  
\begin{align}\label{CLT31}
\hat k =\sup_{n\ge 1,i=1,2}E(||H_{n,i}||_{\infty})< \infty, 
\end{align}
and there exists an absolute constant $k_2< \infty$ 
such that for any $r>0$ 
\begin{align}\label{CLT32}
\sup_{n \ge 1}P(\sup_{h \in E} \sqrt n |D_n(h)- D(h)| \ge r) \le 4\exp\{-\alpha r^2\} 
\end{align}
provided $\alpha>0$ is sufficiently small that
\begin{align}\label{CLT33}
\sqrt{4\alpha} k_2(\hat k+2) <1. 
\end{align}
\end{cor}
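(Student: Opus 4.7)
The plan is to decompose $\sqrt{n}|D_n(h)-D(h)|$ via (\ref{CLT10}), bound each $\|H_{n,i}\|_\infty$ ($i=1,2$) by a sub-Gaussian estimate that is uniform in $n$, and combine with a union bound. Since $E$ is countable, $\|H_{n,i}\|_\infty$ is measurable and $\ell_\infty(E)$ sits in the framework of Chapter 6 of \cite{led-tal-book} via the countable set of evaluation functionals. From (\ref{CLT10}),
\[
P\bigl(\sup_{h\in E}\sqrt n\,|D_n(h)-D(h)|\ge r\bigr)\le P(\|H_{n,1}\|_\infty\ge r/2)+P(\|H_{n,2}\|_\infty\ge r/2),
\]
so (\ref{CLT32}) will follow once I establish, uniformly in $n\ge 1$ and $i\in\{1,2\}$, a bound of the form $P(\|H_{n,i}\|_\infty\ge s)\le 2\exp(-s^2/c^2)$ with $c=k_2(\hat k+2)$ and $k_2$ absolute.

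For the finiteness of $\hat k$ in (\ref{CLT31}), I would invoke the bracketing maximal inequality (Ossiander/Dudley; see e.g.\ Theorem 2.14.2 of \cite{vw}) applied to the classes $\mathcal{E}$ and $\mathcal{F}$ identified in the proof of Theorem \ref{bdd prob clt}: the finite bracketing integral (\ref{brack 2}) verified there gives a uniform-in-$n$ upper bound on $E^{*}\|H_{n,i}\|_\infty$, and countability of $E$ removes the outer expectation. Next, write $H_{n,i}=n^{-1/2}\sum_{j=1}^{n}Y_j^{(i)}$, where each $Y_j^{(i)}$ is a mean-zero $\ell_\infty(E)$-valued vector whose coordinates lie in $[-1,1]$ (being the difference of an indicator and a probability), so each summand $Y_j^{(i)}/\sqrt n$ has norm at most $1/\sqrt n$ a.s. The Banach-space exponential-moment inequality of Hoffmann--J\o rgensen type (Chapter 6 of \cite{led-tal-book}; cf.\ also \cite{jain-marcus-integ-75}, \cite{kuelbs-expon-moments-78}), after the standard symmetrization step, gives a bound of the form $\bigl\|\|S\|\bigr\|_{\psi_2}\le K\bigl(E\|S\|+\bigl\|\max_j\|X_j\|\bigr\|_{\psi_2}\bigr)$ for sums $S$ of independent mean-zero $X_j$; since any $[-1,1]$-valued random variable has bounded $\psi_2$-norm, this yields an absolute $k_2$ with
\[
\bigl\|\|H_{n,i}\|_\infty\bigr\|_{\psi_2}\le k_2\bigl(E\|H_{n,i}\|_\infty+2\bigr)\le k_2(\hat k+2),
\]
uniformly in $n$ and $i$.

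Markov's inequality applied to $\exp(\|H_{n,i}\|_\infty^2/c^2)$ with $c=k_2(\hat k+2)$ then gives $P(\|H_{n,i}\|_\infty\ge s)\le 2\exp(-s^2/c^2)$, and plugging into the first-paragraph reduction with $s=r/2$ produces $4\exp(-r^2/(4c^2))$, which is (\ref{CLT32}) for any $\alpha\le 1/(4c^2)$, i.e.\ precisely for $\sqrt{4\alpha}\,k_2(\hat k+2)\le 1$ as in (\ref{CLT33}). I expect the main obstacle to be pinning down the Hoffmann--J\o rgensen/Ledoux--Talagrand inequality in precisely the form $\|\|S\|\|_{\psi_2}\le k_2(E\|S\|+2)$ with an \emph{absolute} $k_2$: matching the ``$+2$'' requires carefully tracking the symmetrization factor together with the envelope bound on $|I(X_j\succeq h)-P(X\succeq h)|$, while the remainder of the argument is a clean reduction through (\ref{CLT10}) and Markov.
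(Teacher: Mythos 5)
Your proposal is correct, and its core — the reduction through (\ref{CLT10}) and a union bound, the $\psi_2$-Orlicz bound $\|\,\|H_{n,i}\|_{\infty}\|_{\psi_2}\le k_2(E\|H_{n,i}\|_{\infty}+2)$ obtained from the Hoffmann--J\o rgensen-type inequality in Chapter 6 of \cite{led-tal-book} together with the trivial $\psi_2$ bound on the bounded summands, and then Markov's inequality applied to $\exp(4\alpha\|H_{n,i}\|_{\infty}^2)$ to get (\ref{CLT32}) under (\ref{CLT33}) — is exactly the paper's argument (the paper cites Theorem 6.21 of \cite{led-tal-book} and gets $(1/\log 2)^{1/2}$ where you write $2$). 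The one place where you genuinely diverge is the proof of (\ref{CLT31}): you obtain $\hat k<\infty$ directly from a bracketing maximal inequality (e.g.\ Theorem 2.14.2 of \cite{vw}) applied to the classes $\mathcal{E}$ and its analogue for $\{I(X\preceq h)\}$, using the finite bracketing integral (\ref{brack 2}) already verified in the proof of Theorem \ref{bdd prob clt}; the paper instead bootstraps from the weak convergence of $H_{n,i}$ in $\ell_{\infty}(E)$ proved in Theorem \ref{bdd prob clt}, using the Portmanteau theorem to get uniform-in-$n$ quantile bounds, Ottaviani's inequality (Lemma 6.2 of \cite{led-tal-book}) to control $\max_{k\le n}\|S_k/\sqrt n\|_{\infty}$, and then Proposition 6.8 of \cite{led-tal-book} (Hoffmann--J\o rgensen) with $p=1$ to convert boundedness in probability into uniform boundedness of first moments. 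Both routes are valid: yours is shorter and yields an explicit quantitative bound on $\hat k$ in terms of the entropy integral (\ref{ent}), while the paper's argument re-uses only what was already established (the Donsker property/CLT itself) and so needs no additional maximal inequality beyond the Ledoux--Talagrand toolbox it already invokes.
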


\begin{proof} From (\ref{CLT10}) 
$$
P(\sup_{h \in E} \sqrt n |D_n(h)- D(h)| \ge r) \le P((||H_{n,1}||_{\infty} \ge \frac{r}{2}) +  P((||H_{n,2}||_{\infty} \ge \frac{r}{2}).
$$
Hence, Markov's inequality implies
$$
P(\sup_{h \in E} \sqrt n |D_n(h)- D(h)| \ge r) \le \exp\{-\alpha r^2\} \sum_{i=1}^2E(\exp\{4\alpha||H_{n,i}||_{\infty}^2\}),
$$
and (\ref{CLT32}) holds provided $\alpha>0$ is sufficiently small that
\begin{align}\label{CLT34}
\sqrt{4\alpha}\sup_{n \ge 1, i=1,2} ||H_{n,i}||_{\infty, \psi_2} <1, 
\end{align}
where we write $||H_{n,i}||_{\infty, \psi_2}$ to denote the $\psi_2$-norm of $||H_{n,i}||_{\infty}$. Now Theorem 6.21 of \cite{led-tal-book} implies there exists an absolute constant $k_2<\infty$ such that
\begin{align}\label{CLT35}
||H_{n,i}||_{\infty,\psi_2} \le k_2[ E(||H_{n,i}||_{\infty}) + (\sum_{j=1}^n ||\frac{Y_j}{\sqrt n}||_{\infty,\psi_2}^2)^{\frac{1}{2}}], 
\end{align}
where $\{Y_j: j \ge 1\}$ are independent, mean zero, $\ell_{\infty}(E)$ valued random vectors with $ Y_j= \{I( X_j \succeq h)- P(X_j \succeq h): h \in E\}$ for $j \ge 1$ when $i=1$, and $ Y_j= \{I( X_j \preceq h)- P(X_j \preceq h): h \in E\}$ for $j \ge 1$ when $i=2$. Since $||Y_j||_{\infty} \le 1$, we have $ ||\frac{Y_j}{\sqrt n}||_{\infty,\psi_2} \le \frac{1}{ \sqrt{ n \log(2) } }$, and (\ref{CLT35}) implies
\begin{align}\label{CLT36}
||H_{n,i}||_{\infty,\psi_2} \le k_2[ E(||H_{n,i}||_{\infty}) + (\frac{1}{\log(2)})^{\frac{1}{2}}],
\end{align}

Now from (\ref{CLT34}) and (\ref{CLT36}) we have (\ref{CLT32}) for $\alpha>0$ sufficiently small that
\begin{align}\label{CLT37}
\sqrt{4 \alpha}k_2[ \hat k + (\frac{1}{\log(2)})^{\frac{1}{2}}] < 1, 
\end{align}
provided (\ref{CLT31}) holds.

Hence, to complete the proof we must prove $\hat k <\infty$. To accomplish this  we first show
\begin{align}\label{CLT38}
\sup_{n\ge 1}E(||H_{n,1}||_{\infty})< \infty. 
\end{align}
This follows from Proposition 6.8 of \cite{led-tal-book}  applied to the partial sums $S_k$ of the $\{Y_j:  j\ge 1 \}$ with $p=1$ provided we show $\sup_{n\ge 1} t_{0,n} <\infty$, where
\begin{align}\label{CLT39}
t_{0,n}=\inf\{t>0: P(\max_{1 \le k \le n}||\frac{S_k}{\sqrt n}||_{\infty}>t)  \le \frac{1}{8}\}, n \ge 1. 
\end{align}
Moreover, $E$ countable implies Ottaviani's inequality is available as in Lemma 6.2 of \cite{led-tal-book}], and hence for every $u,v>0$
\begin{align}\label{CLT40}
P(\max_{1 \le k \le n}||\frac{S_k}{\sqrt n}||_{\infty}>u+v) \le \frac{P(||\frac{S_n}{\sqrt n}||_{\infty}>v)}{ 1 - \max_{1 \le k \le n} P(||\frac{S_n-S_k}{\sqrt n}||_{\infty} >u)}. 
\end{align}

Furthermore, since $\frac{S_n}{\sqrt n} = H_{n,1}$ for $n \ge 1$, and the proof of Theorem 2 implies $\{H_{n,1}: n \ge 1\}$ satisfies the central limit theorem in $\ell_{\infty}(E)$, the Portmanteau Theorem (applied to closed sets) implies there exists $u_0< \infty$ such that for $u \ge u_0$ 
$$
P(||\frac{S_m}{\sqrt m}||_{\infty} \ge u) \le \frac{1}{2}
$$
for all $m \in [ m_0, \infty)$. Therefore, there exists $u_1\in [u_0,\infty)$ such that
$$
 \sup_{m \ge 1}P(||\frac{S_m}{\sqrt m}||_{\infty} \ge u_1) \le \frac{1}{2},
$$ 
and hence
$$
\sup_{n \ge 1} \max_{1 \le k \le n} P(||\frac{S_n-S_k}{\sqrt n}||_{\infty} \ge u_1) \le  \sup_{n \ge 1}\max_{1 \le m  \le n}P(||\frac{S_m}{\sqrt m}||_{\infty} \ge u_1)  \le \frac{1}{2}.
$$ 
Thus
(\ref{CLT40}) implies for all $v>0$ and $n \ge 1$ that
\begin{align}\label{CLT41}
P(\max_{1 \le k \le n}||\frac{S_k}{\sqrt n}||_{\infty}>u_1+v) \le 2P(||\frac{S_n}{\sqrt n}||_{\infty}>v). 
\end{align}
Again, by the central limit theorem there exists there exists $v_1< \infty$ such that $v \ge v_1$ implies
$$
2 \sup_{n \ge 1}P(||\frac{S_n}{\sqrt n}||_{\infty}\ge v) \le \frac{1}{8}, 
$$
and hence we  see from (\ref{CLT41}) that $\sup_{n\ge 1} t_{0,n} \le u_1+v_1 <\infty$ when $i=1$ (and the partial sums come from the $\{Y_j: j \ge 1\}$). However, the same proof applies when $i=2$ and the partial sums are formed from  $\{Z_j: j \ge 1\}$, where $Z_j= \{I( X_j \preceq h)- P(X_j \preceq h): h \in E\}$ for $j \ge 1$. Hence the proof is complete.
\end{proof}

\section{Half-Region Depth over Finite Subsets}

In order to make half-region depth more amenable to discrete computations we now define half-region depth over finite sets, and prove a uniform consistency result in this setting.  

As before we assume $X:=\{X(t)=X_{t}\colon t\in T\}$ is a stochastic process on the probability space $(\Omega, \mathcal{F},P)$, all of whose sample paths are in $M(T)$.

If $h \in M(T)$ we define the half-region $P$-depth of $h$ with respect to  $J \subseteq T$ to be
\begin{align}\label{JHR}
D_J(h)=\min\{P(X \succeq_J h), P(X\preceq_J h)\}, 
\end{align}
where $h_1 \succeq_J h_2$ ($h_1 \preceq_J h_2$)  holds for functions $h_1,h_2$ defined on $T$ if $h_1(t) \ge h_2(t)$  ($h_1(t) \le h_2(t)$ for all $t \in J$.

Let $X_1,X_2, \cdots$ be i.i.d. copies of the process $X$, and assume $X,X_1,X_2,\cdots$ are defined on $(\Omega,\mathcal{F},P)$ suitably enlarged, if necessary, and that all sample paths of each $X_j$ are in $M(T)$. Then, the empirical half-region
depth of $h \in M(T)$ over a set $J\subseteq T$ is given by
\begin{align}\label{JEHR}
D_{n,J}(h)=\min\{\frac{1}{n} \sum_{j=1}^n I( X_j \succeq_J h), \frac{1}{n} \sum_{j=1}^n I( X_j \preceq_J h)\},
\end{align}

For $h \in M(T)$  and $J$ any finite subset of $T$,  the probabilities in (\ref{JHR}) are defined, and the events in (\ref{JEHR}) are in $\mathcal{F}$. Therefore, the classical law of large numbers implies with probability one
\begin{align}\label{JLLN}
\lim_{n \rightarrow \infty} |D_{n,J}(h)-D_J(h)|=0. 
\end{align}

The next theorem refines (\ref{JLLN}) to be uniform over $h$ and $J$, as long as $J \in \mathcal{J}_r$, where for each integer $r \geq 1$, 
$$
\mathcal{J}_r=\{ J \subseteq T: \#J \leq r\},
$$
and $\# J$ denotes the cardinality of the set $J$.

\begin{thm}\label{J consist}
Let $X,X_1,X_2,\cdots$ be i.i.d. copies of the stochastic process $X= \{X(t): t \in T\}$ defined on the probability space  $(\Omega,\mathcal{F},P)$, and all of whose sample paths
are in the linear space M(T). Let 
\begin{align}\label{C-sets}
\mathcal{C}=\{C_{t,y}: t \in T, y \in \mathbb{R}\},
\end{align}
where $C_{t,y}=\{z \in M(T): z(t) \le y\}$, and assume the empirical CLT holds with respect to the probability $ \mathcal{L}(X)$ over $\mathcal{C}$. Then, for every integer $r \ge 1$ fixed we have with probability one that
\begin{align}\label{JULLN}
\lim_{n \rightarrow \infty} [\sup_{h \in M(T)}\sup_{J \in \mathcal{J}_r} |D_{n,J}(h)-D_J(h)|]^{*}=0.
\end{align}
\end{thm}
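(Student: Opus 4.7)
The plan is to use \eqref{min-min} to decouple the two minima and reduce \eqref{JULLN} to a uniform Glivenko-Cantelli statement for two classes of indicator functions, each cut out by at most $r$ one-dimensional thresholds. Applying \eqref{min-min} exactly as in \eqref{classicalLLN} gives
\[
\sup_{h,J}|D_{n,J}(h)-D_J(h)| \le A_n + B_n,
\]
where $A_n$ and $B_n$ are the suprema of $|\tfrac{1}{n}\sum_j I(X_j\succeq_J h)-P(X\succeq_J h)|$ and $|\tfrac{1}{n}\sum_j I(X_j\preceq_J h)-P(X\preceq_J h)|$ over $h\in M(T)$ and $J\in\mathcal{J}_r$. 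Because each event $\{z\preceq_J h\}$ or $\{z\succeq_J h\}$ depends on $h$ only through the vector $\{h(t):t\in J\}\in\mathbb{R}^{\#J}$, the sup over $h\in M(T)$ collapses to a sup over vectors in $\mathbb{R}^{k}$ with $k\le r$.

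So, after passing to measurable covers, it suffices to show the two function classes
\[
\mathcal{F}_r^{-}=\Bigl\{\textstyle\prod_{i=1}^{k}I_{C_{t_i,y_i}}:k\le r,\,t_i\in T,\,y_i\in\mathbb{R}\Bigr\},\quad \mathcal{F}_r^{+}=\Bigl\{\textstyle\prod_{i=1}^{k}I_{D_{t_i,y_i}}:k\le r,\,t_i\in T,\,y_i\in\mathbb{R}\Bigr\},
\]
with $D_{t,y}=\{z\in M(T):z(t)\ge y\}$, are $P$-Glivenko-Cantelli. The empirical CLT hypothesis gives that $\mathcal{C}$ is $P$-Donsker, hence $P$-Glivenko-Cantelli; since the indicators are uniformly bounded by $1$, standard product preservation (e.g., van der Vaart--Wellner Theorems~2.10.1 and~2.10.6) shows $r$-fold products inherit the GC property, which handles $\mathcal{F}_r^{-}$. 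For $\mathcal{F}_r^{+}$ I would first promote the single-level class $\mathcal{D}=\{D_{t,y}\}$ to GC via $\{z(t)\ge y\}=\bigcap_{n\ge 1}C_{t,y-1/n}^{c}$: the complement class $\{C^{c}:C\in\mathcal{C}\}$ has the same bracketing and covering numbers as $\mathcal{C}$, and an $L^{1}(P)$ sandwich bounds each $I_{D_{t,y}}$ between elements of $\mathcal{C}^{c}$ within arbitrarily small $L^{1}$-error. Another application of product preservation then yields that $\mathcal{F}_r^{+}$ is $P$-Glivenko-Cantelli, and \eqref{JULLN} follows.

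The main obstacle is the transfer from sublevel to superlevel sets when the marginal of $X(t)$ has an atom at $y$, where $\{z(t)\ge y\}$ and $\{z(t)>y\}$ differ in $P$-mass and the monotone approximation above is not sharp. Should this cause trouble, I would work directly with bracketing entropy: any $L^{1}(P)$-bracket $[I_{C_{t,y-\epsilon}},I_{C_{t,y+\epsilon}}]$ for $\mathcal{C}$ produces by complementation an $L^{1}$-bracket for $\mathcal{D}$ of width $P(X(t)\in[y-\epsilon,y+\epsilon])$, and $r$-fold products inflate widths by at most a factor of $r$. Finiteness of the bracketing numbers for $\mathcal{C}$ is implied by the empirical CLT assumption, placing the Blum--DeHardt theorem (as used already in the proof of Theorem~\ref{consist 1}) within reach and completing the argument uniformly in $h$, $J$, and the presence of atoms.
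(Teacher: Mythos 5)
Your decomposition via \eqref{min-min}, the collapse of the supremum over $h$ to the finite vector $(h(t))_{t\in J}$, and your treatment of the $\preceq_J$ half track the paper's proof: there one writes $\{z\colon z(t)\le h(t),\,t\in J\}=\cap_{j\le r}C_{t_j,h(t_j)}$, notes its indicator is $\min(f_1,\dots,f_r)$ with $f_j=I_{C_{t_j,h(t_j)}}$, and invokes Theorem 2.10.6 of \cite{vw} (Lipschitz transformations preserve the Donsker property) plus Lemma 2.10.14 of \cite{vw} (Donsker implies almost sure uniform convergence) to conclude; your ``product preservation'' step is the same device, aimed at the weaker Glivenko--Cantelli conclusion.

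The genuine gap is in your $\succeq_J$ half, exactly where you sense trouble. The hypotheses do not exclude atoms in the marginals $\mathcal{L}(X(t))$: the theorem does not assume the smoothing $X=Y+Z$, and the paper's remark lists compound Poisson processes among the intended examples, for which $P(X(t)=0)>0$. So your $L^1$ sandwich of $I(z(t)\ge y)$ between $I_{C_{t,y}^c}$ and $I_{C_{t,y-\epsilon}^c}$ has width at least $P(X(t)=y)$, and your main route fails there, as you admit. Your fallback then rests on a false implication: the empirical CLT (Donsker property) for $\mathcal{C}$ does \emph{not} imply finiteness of its $L_1(P)$-bracketing numbers --- bracketing entropy is a sufficient, not a necessary, condition for the CLT --- so Blum--DeHardt is not ``within reach'' from the hypothesis, and the argument for $A_n$ is not closed. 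The repair stays inside the Donsker calculus and needs no atom condition and no brackets: the complement class $\{\{z\colon z(t)>y\}\colon t\in T,\,y\in\mathbb{R}\}$ is Donsker (the empirical process of $1-f$ is the negative of that of $f$); minima of $r$ such indicators, i.e.\ the sets $\{z\colon z(t)>h(t)-1/m\ \forall t\in J\}$, are Donsker by Theorem 2.10.6 of \cite{vw}; the sets $\{z\colon z(t)\ge h(t)\ \forall t\in J\}$ arising in $A_n$ are decreasing limits over $m$ of these, with convergence both pointwise and in $L_2(P)$ (continuity of measure), and the Donsker property is preserved under such pointwise-plus-$L_2(P)$ sequential closure (Section 2.10 of \cite{vw}); finally Donsker implies strong Glivenko--Cantelli as before. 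This is in substance how the paper finishes the $A_n$ half (its passage to the complement class $\tilde{\mathcal{D}}$ is terse precisely on the strict-versus-nonstrict inequality point, and the closure step just described is what makes it rigorous); without it, or an equivalent device, your proposal does not yield \eqref{JULLN} in the generality claimed.
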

 \bigskip

\begin{rem}
The implication in (\ref{JULLN}) does not follow from the corresponding finite dimensional result for half-region depth since the finite set $J$ is not fixed, but it is also the case that the assumption of an empirical CLT over $\mathcal{C}$ is non-trivial. Fortunately
 \cite{kkz} and \cite{kz-quant} provide many examples of processes that satisfy this empirical CLT, and to which Theorem \ref{J consist} applies. These include  a broad collection of Gaussian processes, compound Poisson processes, stationary independent increment stable processes, and martingales. 
Moreover, if  $J: \Theta \rightarrow \mathcal{J}_r$, then
$$
[\sup_{h \in M(T)}\sup_{\theta \in \Theta}|D_{n,J(\theta)}(h) - D_{J(\theta)}(h)|]^{*} \le[\sup_{h \in M(T)} \sup_{J \in \mathcal{J}_r}|D_{n,J}(h) - D_{J}(h)|]^{*},
$$
and hence it is immediate that (\ref{JULLN}) holds when the choice of $J$ is  arbitrarily parameterized by $\Theta$ as long as $\# J(\theta) \le r, \theta \in \Theta$. 
\end{rem}

\begin{proof} 
Since (\ref{min-min}) holds, we have
$$
|D_{n,J}(h)-D_J(h)| \le  A_n(J,h) +  B_n(J,h), 
$$
where
$$
A_n(J,h)=|\frac{1}{n}\sum_{j=1}^n I( X_j \succeq_J h) -P(X\succeq_J h)|  
$$
and
$$
B_n(J,h)=|\frac{1}{n}\sum_{j=1}^n I( X_j \preceq_J h) -P(X\preceq_J h)|.
$$
Therefore, sub-additivity of measurable cover functions implies (\ref{JULLN}) will follow once we verify that with probability one
$$
\lim_{n \rightarrow \infty} [ \sup_{h \in M(T)} \sup_{J \in \mathcal{J}_r} A_n(J,h)]^{*} = \lim_{n \rightarrow \infty} [ \sup_{h \in C[0,1]} \sup_{J \in \mathcal{J}_r} B_n(J,h)]^{*} =0. 
$$

Fix an integer $r \ge 1,$ and set 
$$
\phi(u_1,\cdots,u_r)= \min\{u_1,\cdots,u_r\}.
$$
Then, for $J \in \mathcal{J}_r$, $h \in M(T)$, and $j=1,\cdots,r$, define
$$
f_j= I_{C_{t_j,h(t_j)}},
$$
which implies
$$
D_{J,h}= \{z \in M(T): z(t) \le h(t), t \in J\} = \cap_{j=1}^r C_{t_j, h(t_j)},
$$
and
$$
I_{D_{J,h}} = \phi(f_1,\cdots,f_r).
$$
Since $\mathcal{C}$ is a Donsker class with respect to $P$ and $r\ge 1$ is fixed, Theorem 2.10.6 of \cite{vw} implies
$$
\mathcal{D}= \{ D_{J,h}: J \in \mathcal{J}_r, h \in M(T)\}
$$
is also $P$-Donsker with respect to $P$. Thus by Lemma 2.10.14 of \cite{vw} we have almost surely that
\begin{align}\label{JULLNB}
\lim_{n \rightarrow \infty} [ \sup_{h \in M(T)} \sup_{J \in \mathcal{J}_r} B_n(J,h)]^{*} =0.  
\end{align}

Since $\tilde{ \mathcal{D}} =\{ D_{J,h}^c: D_{J,h} \in \mathcal{J}_r\}$ is then also a Donsker class, the above argument implies that
\begin{align}\label{JULLNA}
\lim_{n \rightarrow \infty} [ \sup_{h \in M(T)} \sup_{J \in \mathcal{J}_r} A_n(J,h)]^{*} =0. 
\end{align}
Combining (\ref{JULLNB}) and (\ref{JULLNA}) implies (\ref{JULLN}).
\end{proof}

\end{document}